\documentclass[11pt]{article}
\usepackage{verbatim}
\usepackage{authblk}
\usepackage{tikz}
\usepackage{tikz-3dplot}
\usepackage{amsmath,amssymb,amsthm}
\usepackage{hyperref}
\usepackage[capitalise]{cleveref}
\usepackage[margin=1in]{geometry}
\usepackage{fancyhdr}
\usepackage{mathtools}
\usepackage{bm}
\usepackage[mathscr]{euscript}
\usepackage{xcolor}
\usepackage{enumitem}
\usepackage{lipsum}
\usepackage{lineno}
\usepackage{tikz}
\usepackage{tikz-cd}
\usepackage{multicol}
\usepackage{tikz}
\usepackage{tikz-3dplot}

\numberwithin{equation}{section}
\newtheorem{theorem}{Theorem}[section]
\newtheorem{corollary}[theorem]{Corollary}
\newtheorem{definition}[theorem]{Definition}
\newtheorem{example}[theorem]{Example}
\newtheorem{lemma}[theorem]{Lemma}

\newtheorem{proposition}[theorem]{Proposition}
\newtheorem{remark}[theorem]{Remark}

\allowdisplaybreaks
\newcommand\hide[1]{}

\newcommand{\Z}{{\mathbb Z}}

\newcommand{\R}{{\mathbb R}}

\newcommand{\mc}[1]{\mathcal{#1}}

\usepackage[all,2cell]{xy}
	\usepackage{amssymb, hyperref}
	\usepackage[all,2cell]{xy}
	\usepackage{hyperref}
	\usepackage{array}
	\usepackage[capitalise]{cleveref}

\begin{document}
		
\title{Toric Schubert Varieties in Partial Flag Varieties}

\author[1]{Mahir Bilen Can}
\author[2]{Arpita Nayek}
\author[3]{Pinakinath Saha}
\affil[1]{\small{Tulane University, New Orleans, USA, mahirbilencan@gmail.com}}
\affil[2]{\small{Jaypee Institute of Information Technology, Noida, Uttar Pradesh, nayekarpita@gmail.com}}
\affil[3]{\small{Indian Institute of Technology Delhi, New Delhi, pinakinath@iitd.ac.in}}
		
\maketitle

\begin{abstract}
In this article, we investigate the toric Schubert varieties in partial flag varieties $G/P$ for a connected semisimple algebraic group $G$. 
Using Deodhar's decomposition of Richardson varieties and the work of Pasquier, 
we give an explicit description of the fan of a toric Schubert variety, leading to a combinatorial model for its cones. 
As an application, we obtain necessary and sufficient conditions for smoothness of toric Schubert varieties in terms of the Cartan integers associated to a reduced expression. 
Furthermore, we prove that for a Coxeter-type element $w \in W^P$, the interval $[e,w]_{W^P}$ is a supersolvable join-distributive lattice. Finally, we apply these results to the study of spherical and horospherical Schubert varieties, providing a combinatorial method for checking the smoothness via the associated toric Schubert varieties.
\end{abstract}

\noindent 
\textbf{Keywords: Richardson varieties, Schubert varieties, toric varieties} 

\noindent 
\textbf{MSC: 14M15, 14M25}

\section{Introduction}

A normal irreducible variety $X$ equipped with an algebraic action of a torus $T$ is called \emph{toric} if $X$ contains a dense open $T$-orbit. 
More generally, a normal irreducible variety $X$ with an action of a semisimple algebraic group $G$ is called \emph{spherical} if it contains a dense open orbit for a Borel subgroup $B \subset G$. 
Toric varieties form a distinguished subclass of spherical varieties, and their rich combinatorial structure makes them a natural testing ground for geometric questions~\cite{Fulton1993}.
For related recent work on Levi-spherical Schubert varieties, see  ~\cite{CS2023} and ~\cite{GaoHodgesYong}. 
More specifically, our goal is to characterize toric Schubert varieties in partial flag varieties and to analyze their geometry using combinatorial and representation-theoretic tools.

\medskip

Let $G$ be a connected semisimple algebraic group, $B \subset G$ a Borel subgroup, and $T \subset B$ a maximal torus. 
For $w$ in the Weyl group $W$, let $X_{wB} \subset G/B$ denote the corresponding Schubert variety. 
It is known that $X_{wB}$ is a toric variety for the action of $T$ if and only if $w$ is of \emph{Coxeter type}, that is, $w$ can be written as a product of pairwise distinct simple reflections (see \cite{Karuppuchamy2013,LeeMasudaPark2023}). 
This result establishes a direct link between toric geometry and the combinatorics of reduced expressions.
One of the main objectives of our paper is to extend this characterization to Schubert varieties in arbitrary partial flag varieties $G/P$. 
Our approach is based on a detailed analysis of Deodhar's decomposition of Richardson varieties (studied in \cite{CS2025, Deodhar1985, MarshRietsch2004}; 
see also \cite{GorskyKimShermanBennett2026} for recent results on toric Richardson varieties).
In particular, we show that the dense Deodhar component admits a natural torus parametrization, and that the toric property can be detected from the combinatorics of reduced expressions. 
This yields a conceptual and uniform proof of the Coxeter-type criterion.

\medskip

We also obtain an explicit description of the fan associated to a toric Schubert variety. 
Using Bott-Samelson coordinates together with a detailed analysis of transition functions, we describe the fan in terms of generators determined by simple roots and Cartan integers. 
This leads to a concrete combinatorial model for the maximal cones, indexed by Weyl group elements, and allows us to translate geometric properties into explicit linear-algebraic conditions. 
For related descriptions in the full flag case, see also \cite{GrossbergKarshon1994, LeeMasudaPark2023, lee2024torusorbitclosuresflag,   KimLee2026}.

\medskip

We then pass to partial flag varieties. 
Let $P \subset G$ be a parabolic subgroup and $W^P$ the set of minimal coset representatives. 
We prove that a Schubert variety $X_{wP} \subset G/P$ is toric for the action of $T$ if and only if $w \in W^P$ is of Coxeter type. 
Moreover, we describe the fan of $X_{wP}$ in terms of the fan of $X_{wB}$ via the natural projection $G/B \to G/P$. 
In type $A$ Grassmannians, related classification and fan descriptions were obtained by Kim and Lee in \cite{KimLee2026}. 
Our main result gives a necessary and sufficient condition for smoothness of toric Schubert varieties in $G/P$. 
More precisely, for a Coxeter-type element $w\in W^P$, we give a combinatorial criterion in terms of the Cartan integers associated to a reduced expression of $w$ (see Theorem~\ref{thm:direct-smoothness}).
Our approach simultaneously explains the toric property, the fan structure, and smoothness of Schubert varieties in both full and partial flag varieties.
From a combinatorial perspective, the Bruhat intervals associated to toric Schubert varieties possess a highly rigid and elegant structure. 
While the lower Bruhat interval of a Coxeter-type element in the full Weyl group is known to be a Boolean lattice, its projection into the quotient $W^P$ is generally not Boolean. 
Nevertheless, we establish that for any Coxeter-type element $w \in W^P$, the interval $[e,w]_{W^P}$ forms the feasible-set lattice of a supersolvable antimatroid in the sense of~\cite{Armstrong2009}. 
Consequently, these intervals are supersolvable join-distributive lattices, revealing a deep connection between the geometry of partial flag varieties and the theory of antimatroids.
\medskip

A central theme of the paper is the study of smoothness of toric Schubert varieties. 
Using our explicit description of the fan, we derive necessary and sufficient conditions for smoothness in terms of Cartan data. 
These conditions admit a combinatorial interpretation in terms of the reduced expression of $w$, leading to a transparent criterion for smoothness of $X_{wP}$.

\medskip

Our results are directly applicable to the \emph{horospherical Schubert varieties}. 
To explain, let us first define the horospherical varieties. 
A closed subgroup $H \subset G$ is called \emph{horospherical} if it contains a maximal unipotent subgroup of $G$, and a normal irreducible $G$-variety is horospherical if the stabilizer of a point in general position is horospherical. 
Horospherical varieties form an important subclass of spherical varieties, and include toric varieties as a special case.

In our previous work \cite{CKS}, we obtained a complete characterization of horospherical Schubert varieties and proved that every such variety is smooth.
More precisely, a Schubert variety $X_{wB}$ is horospherical (for a suitable Levi subgroup) if and only if $w$ admits a factorization
\[
w = w_{0,J} \, c
\]
where $c$ is a Coxeter-type element with support disjoint from $J$. 
This factorization also implies that the corresponding Schubert variety is smooth.
In the present paper, we recover this smoothness phenomenon from a toric viewpoint. 
In particular, our smoothness criteria imply, as a special case, that horospherical Schubert varieties are smooth. 

\medskip

The structure of our paper is as follows.
In Section~\ref{sec:preliminaries}, we fix notation and recall basic facts about Weyl groups, Schubert varieties, and Deodhar decompositions. 
In Section~\ref{sec:fans}, we study toric Schubert varieties in $G/B$, prove the Coxeter-type criterion.
In Section~\ref{sec:Pasquier}, we explicitly describe the fans associated to toric Schubert varieties in $G/B$ and extend these results to partial flag varieties and relate the fan of $X_{wP}$ to that of $X_{wB}$. 
In Section~\ref{sec:intervals}, we prove the expected Bruhat interval characterization for toric Schubert varieties and establish that these projected intervals are supersolvable join-distributive lattices.
In Section~\ref{sec:smooth}, we establish our smoothness criteria for toric Schubert varieties and give several combinatorial reformulations. 
Finally, in Section~\ref{sec:applications}, we apply these results to obtain consequences for horospherical Schubert varieties and related classes of spherical varieties.

\section{Notation and Preliminaries}
\label{sec:preliminaries} 

In this section, we fix notation and recall basic facts on algebraic groups,
root systems, Schubert varieties, and Deodhar decomposition.
Standard references include  \cite{BilleyLakshmibai}, \cite{Brion2005}, \cite{BrionKumar}, \cite{Hartshorne}, \cite{Springer}, \cite{Humphreys}, and \cite{Humphreys1972}.
For Coxeter groups, Bruhat order, reduced expressions, and the subword criterion, we also use
\cite{BjornerBrenti, Humphreys1990}.
For toric varieties and fans, we use the conventions of
\cite{CLS, Fulton1993}.

Let $G$ be a semisimple algebraic group over an algebraically closed field $k$ of characteristic 0.
Fix a maximal torus $T \subset G$, and denote by $W := N_G(T)/T$ the Weyl group of $G$ with respect to $(G, T)$.
Let $R$ be the root system of $G$ with respect to $(G,T)$.

Fix a Borel subgroup $B \subset G$ containing $T$, and let
\[
B^- := n_0 B n_0^{-1}
\]
be the opposite Borel subgroup, where $n_0 \in N_G(T)$ represents the longest element $w_0 \in W$.

Let $R^{+}$ (resp. $R^{-}$) be the set of positive (resp. negative) roots with respect to $B$, so that $R^{-} = -R^{+}$.
Let
\[
S = \{\alpha_1, \dots, \alpha_n\} \subset R^{+}
\]
be the set of simple roots, where $n = \mathrm{rank}(G)$.
For each $\alpha_i \in S$, we denote by $s_i \in W$ the corresponding simple reflection.

An element $w \in W$ is said to be of \emph{Coxeter type} if it can be written as a product of pairwise distinct simple reflections.

Let $X(T)$ and $Y(T)$ denote the character and cocharacter lattices of $T$, respectively.
These are free abelian groups of the same rank, equipped with a perfect pairing
\[
\langle \cdot , \cdot \rangle : Y(T) \times X(T) \longrightarrow \mathbb{Z}
\]
defined by the exponent of the natural evaluation map.

Set
\[
Y(T)_\R := Y(T) \otimes_\Z \R, \qquad X(T)_\R := X(T) \otimes_\Z \R,
\]
and extend the pairing $\langle \cdot, \cdot \rangle$ to a real-valued bilinear form.
Let $(\cdot,\cdot)$ be a $W$-invariant positive definite symmetric bilinear form on $X(T)_\R$.
For $\alpha \in R$, define the associated reflection by
\[
s_\alpha(\lambda) = \lambda - \frac{2(\lambda,\alpha)}{(\alpha,\alpha)} \alpha,{\text{~where~} \lambda \in X(T)_{\R}.}
\]
The coroot $\alpha^\vee \in Y(T)_\R$ corresponding to the simple root $\alpha$ is defined by
\[
\langle \alpha^\vee, \lambda  \rangle = \frac{2(\lambda,\alpha)}{(\alpha,\alpha)}, {\text{~where~} \lambda \in X(T)_{\R}.}
\]
A weight $\lambda \in X(T)$ is called \emph{dominant} if $\langle \alpha_i^\vee, \lambda \rangle \ge 0$ for all $\alpha_i \in S$.
The set of dominant weights is denoted by $X(T)_+$.
For each $\alpha_i \in S$, let $\varpi_i$ denote the corresponding fundamental weight,
characterized by
\[
\langle  \alpha_j^\vee, \varpi_i \rangle = \delta_{ij},
\]
where $\delta_{ij}$ denotes the Kronecker delta.
\medskip

Let $P \supset B$ be a parabolic subgroup. 
Define
\[
S_P := \{ \alpha \in S \mid U_\alpha \subset P \},
\]
and let $W_P \subset W$ be the subgroup generated by $\{ s_\alpha : \alpha \in S_P \}$.
The \emph{partial flag variety} associated to $P$ is $G/P$.

For each $w \in W^P$ (the set of minimal length representatives of the right cosets in $W/W_P$),
we define the \emph{Schubert} and \emph{opposite Schubert} varieties in $G/P$ by
\[
X_{wP} := \overline{BwP}/P, \qquad X^{wP} := \overline{B^-wP}/P.
\]

\medskip

For $w \in W$, the \emph{length} of $w$, denoted by $\ell(w)$, is the minimum number of simple reflections required to write $w$ as a product. Thus we obtain the length function
\[
\ell : W \to \mathbb{Z}_{\ge 0}, \quad w \mapsto \ell(w),
\]
which coincides with the dimension function
\[
w \mapsto \dim X_{wB}, \quad w \in W.
\]
\medskip

The $T$-fixed points in $G/P$ will be denoted by $\tau_w$, where $w\in W^P$. We denote the identity coset representative in $W^P$ by $1$ and the corresponding $T$-fixed point in $G/P$ will be denoted by $\tau_1$. 
Finally, $\leqslant$ will denote the Bruhat-Chevalley order on $W^P$. 
For $v\leqslant w$ and $v\neq w$, we write $v < w$.

\subsection{Billey-Postnikov decomposition}

For a parabolic subgroup $P \supset B$ corresponding to a subset of simple roots $S_P \subset S$, we denote the longest element of the parabolic Weyl group $W_P$ by $w_{0,P}$ (or $w_{0,J}$ if the parabolic is indexed by a subset $J \subset S$). 
Every element $w \in W$ admits a unique parabolic factorization $w = u v$, where $u \in W^P$ is a minimal length representative of a right coset in $W/W_P$, and $v \in W_P$. 
This factorization is length-additive, meaning $\ell(w) = \ell(u) + \ell(v)$. 

While $W^P$ denotes the set of minimal length representatives of the right cosets in $W/W_P$, we similarly define ${}^P W$ as the set of minimal length representatives of the left cosets in $W_P \backslash W$. 
Taking the inverse of an element naturally swaps these sets: $w \in W^P$ if and only if $w^{-1} \in {}^P W$. 
Furthermore, if $J \subseteq K \subseteq S$ are subsets of simple roots, we will frequently utilize the intersection of these sets of representatives, denoted by $W_K^J := W^J \cap W_K$. 
This set represents the minimal length representatives of $W_K / W_J$.

The concept of a \emph{Billey-Postnikov (BP) decomposition} originates from the study of Poincar\'e polynomials of rationally smooth Schubert varieties. 
Originally, in~\cite{BilleyPostnikov}, Billey and Postnikov showed that if a finite-type Schubert variety in $G/B$ is rationally smooth, the Weyl group element $w$ (or its inverse) admits a parabolic decomposition $w = vu$ with respect to a subset $K \subset S$ (where $S \setminus K$ is a single leaf of the Dynkin diagram) such that its Poincar\'e polynomial factors as $P_w(t) = P_v^K(t) P_u(t)$. 
Later, Richmond and Slofstra generalized this concept to arbitrary partial flag varieties by dropping the restrictive leaf condition. 
Following their terminology in~\cite{RichmondSlofstra}, if $J \subseteq K \subseteq S$ are subsets of simple roots and $w \in W^J$, a parabolic decomposition $w = vu$ (where $v \in W^K$ and $u \in W_K^J$) is defined as a BP decomposition with respect to $(J, K)$ if the relative Poincar\'e polynomials strictly satisfy $P_w^J(t) = P_v^K(t) P_u^J(t)$. 
Crucially for geometric applications, Richmond and Slofstra established that this polynomial factorization is combinatorially equivalent to $u$ being the unique maximal element in the Bruhat intersection $[e, w] \cap W_K^J$. This equivalence provides the exact condition required for the natural projection between Schubert varieties to be a Zariski-locally trivial fiber bundle.

\subsection{Deodhar components, subexpressions of a reduced expression}

This subsection provides us with the main technical tool that is due to Deodhar~\cite{Deodhar1985}.
We will follow the exposition of Marsh and Rietsch~\cite{MarshRietsch2004}, while keeping track of all intermediate constructions.

\medskip

Let $w\in W$.
An \emph{expression} for $w$, denoted by $\mathbf{w}$, is a sequence $(w_{(0)},\dots,w_{(r)})\in W^{r+1}$ such that 
\begin{enumerate}
\item $w_{(0)}=1$,
\item $w_{(r)}=w$, 
\item $w_{(j)}$ is either $w_{(j-1)}$ or $w_{(j-1)}s_i$ for some $s_i\in S$. 
\end{enumerate}
Since for every $j\in \{1,\dots, r\}$, the product $w_{(j-1)}^{-1}w_{(j)}$ is an element of $S\cup \{1\}$,
the data of the expression $\mathbf{w}$ is equivalent to its \emph{sequence of factors}
\[
(w_{(1)}, w_{(1)}^{-1}w_{(2)}, \dots, w_{(r-1)}^{-1}w_{(r)}).
\]

\medskip

\begin{example}
\label{ex:runningexample}
Let $G=SL_3$ so that $W \cong S_3$ is generated by $s_1, s_2$.
Let $w = s_1 s_2 s_1$.
A reduced expression is given by
\[
\mathbf{w} = (1, s_1, s_1 s_2, s_1 s_2 s_1).
\]
The corresponding sequence of factors is $(s_1, s_2, s_1)$.
\end{example}

Let $\mathbf{w} = (w_{(0)},\dots, w_{(r)})$ be an expression of length at least $2$.
We define three subsets of $\{1,\dots,r\}$:
\begin{align*}
J_{\mathbf{w}}^{+} & :=\{j: w_{(j-1)}<w_{(j)}\}, \\
J_{\mathbf{w}}^{\circ} &:= \{j: w_{(j-1)}=w_{(j)}\}, \\
J_{\mathbf{w}}^{-} &:= \{j: w_{(j)}<w_{(j-1)}\}.
\end{align*}
We say that $\mathbf{w}$ is \emph{nondecreasing} if $J_{\mathbf{w}}^{-}=\emptyset$,
and \emph{reduced} if $J_{\mathbf{w}}^- \cup J_{\mathbf{w}}^\circ = \emptyset$.

\medskip

\begin{example}
For the expression $\mathbf{w}$ from Example~\ref{ex:runningexample}, we have
\[
J_{\mathbf{w}}^+ = \{1,2,3\}, \qquad J_{\mathbf{w}}^\circ = \emptyset, \qquad J_{\mathbf{w}}^- = \emptyset.
\]
Hence, $\mathbf{w} = (1, s_1, s_1 s_2, s_1 s_2 s_1)$ is reduced. 
\end{example}

\medskip

Let $(s_{i_1},\dots,s_{i_r})$ be the sequence of factors of a reduced expression $\mathbf{w}$ for $w\in W$,
and let $v\in W$ with $v\leqslant w$.
A \emph{subexpression for $v$ in $\mathbf{w}$} is a sequence
\[
\mathbf{v} = (v_{(0)},\dots,v_{(r)})
\]
such that
\[
v_{(j)} \in \{v_{(j-1)},\, v_{(j-1)} s_{i_j}\}.
\]
If, in addition,
\[
v_{(j)} \leqslant v_{(j-1)} s_{i_j}
\]
holds for all $j$, then $\mathbf{v}$ is called a \emph{distinguished subexpression}.
This condition ensures that whenever right multiplication by $s_{i_j}$ decreases Bruhat order, one must take the step $v_{(j)} = v_{(j-1)} s_{i_j}$.
When $\mathbf{v}$ is a distinguished subexpression of $\mathbf{w}$, we will write $\mathbf{v} \preceq \mathbf{w}$.

\medskip

\begin{example}
We continue with Example~\ref{ex:runningexample}.
Let $w = s_1 s_2 s_1$ and $v = s_1$.
A subexpression is obtained by choosing which reflections to apply:
\[
\mathbf{v} = (1, s_1, s_1, s_1).
\]
This is distinguished since all steps satisfy the required inequalities.
\end{example}

\medskip

A distinguished subexpression $\mathbf{v}$ is called \emph{positive} if
\begin{equation}\label{positive-condition}
v_{(j-1)} < v_{(j-1)} s_{i_j} \qquad \text{for all $j$.}
\end{equation}
Equivalently, this means that right multiplication by $s_{i_j}$ increases the Bruhat order at every step.
In particular, the conditions
\[
v_{(j)} \in \{v_{(j-1)},\, v_{(j-1)} s_{i_j}\}
\quad \text{and} \quad
v_{(j)} \leqslant v_{(j-1)} s_{i_j}
\]
continue to hold, but the positivity condition imposes the additional requirement
that $v_{(j-1)} < v_{(j-1)} s_{i_j}$.
We denote a positive subexpression by $\mathbf{v}_+$.

A fundamental result (see~\cite[Lemma 3.5]{MarshRietsch2004}) states that for every $v\leqslant w$,
there exists a unique positive subexpression $\mathbf{v}_+$ of $v$ in $\mathbf{w}$.

\begin{example}
We continue with the running example Example~\ref{ex:runningexample}.
Let $w=s_1s_2s_1$ and fix the reduced expression
\[
\mathbf{w}=(1,s_1,s_1s_2,s_1s_2s_1).
\]
Then the sequence of factors is $(s_1,s_2,s_1)$.
We construct the positive subexpression of $v:=s_1$ in $\mathbf{w}$.
Set
\[
\mathbf{v}=(v_{(0)},v_{(1)},v_{(2)},v_{(3)})=(1,1,1,s_1).
\]
We verify first that $\mathbf{v}_+$ is a subexpression of $\mathbf{w}$:
\begin{align*}
v_{(1)}&=1\in\{1,1\cdot s_1\}=\{1,s_1\},\\
v_{(2)}&=1\in\{1,1\cdot s_2\}=\{1,s_2\},\\
v_{(3)}&=s_1\in\{1,1\cdot s_1\}=\{1,s_1\}.
\end{align*}
Next we check that it is distinguished:
\begin{align*}
v_{(1)}&=1\leqslant s_1=v_{(0)}s_{i_1},\\
v_{(2)}&=1\leqslant s_2=v_{(1)}s_{i_2},\\
v_{(3)}&=s_1\leqslant s_1=v_{(2)}s_{i_3}.
\end{align*}
Thus $\mathbf{v}\preceq \mathbf{w}$.
Finally, we check positivity. Since
\[
v_{(0)}=1,\qquad v_{(1)}=1,\qquad v_{(2)}=1,
\]
we have
\begin{align*}
v_{(0)}s_{i_1}&=1\cdot s_1=s_1,\\
v_{(1)}s_{i_2}&=1\cdot s_2=s_2,\\
v_{(2)}s_{i_3}&=1\cdot s_1=s_1.
\end{align*}
Therefore
\[
1<s_1,\qquad 1<s_2,\qquad 1<s_1,
\]
so that
\[
v_{(j-1)}<v_{(j-1)}s_{i_j}\qquad\text{for all }j\in\{1,2,3\}.
\]
Hence $\mathbf{v}$ is positive.
We conclude that the unique positive subexpression of $v=s_1$ in
\[
\mathbf{w}=(1,s_1,s_1s_2,s_1s_2s_1)
\]
is
\[
\mathbf{v}_+=\mathbf{v}= (1,1,1,s_1).
\]
\end{example}

\subsection{Relation to Richardson varieties.}
Let $X_w^v$ be a Richardson variety.
Let $\mathbf{v}_+$ be the positive subexpression corresponding to $v$ in $\mathbf{w}$.
By discarding the indices in $J_{\mathbf{v}_+}^{\circ}$, one obtains a reduced expression for $v$.
In particular,
\[
\ell(v) = \ell(w) - |J_{\mathbf{v}_+}^{\circ}|.
\]
Equivalently,
\[
\dim(X_w^v) = |J_{\mathbf{v}_+}^{\circ}|.
\]
We proceed to define the \emph{Deodhar decomposition}.
Let
\[
\mc{R}_{v,w} := B\tau_w \cap B^- \tau_v.
\]
Since $B=UT$ and $B^-=U^-T$, and since $\tau_w$ and $\tau_v$ are torus fixed points, we have
$\mc{R}_{v,w} = U \tau_w \cap U^- \tau_v$.

Let $\mathbf{w} = (w_{(0)},\dots,w_{(r)})$ be a reduced expression for $w$.
For each $j$, we have a natural projection morphism
\[
\pi^w_{w(j)} : B \tau_w \to B \tau_{w(j)}, \qquad b \tau_w \mapsto b \tau_{w(j)}.
\]
For $x\in \mc{R}_{v,w}$, define
\[
x_j := \pi^w_{w(j)}(x).
\]
Each $x_j$ lies in a unique Bruhat cell $B^- n_u B/B$.
We denote this element $u$ by $v_{(j)}$ and obtain a sequence
\[
\mathbf{v} = (v_{(0)},\dots,v_{(r)}).
\]
Following~\cite[\S 4.1]{MarshRietsch2004}, we define
\[
\mc{R}_{\mathbf{v},\mathbf{w}} := \{x\in \mc{R}_{v,w} : x_j \in \mc{R}_{v_{(j)}, w_{(j)}} \text{ for all } j\}.
\]
These subsets are called \emph{Deodhar components}.
We now list several important properties of the Deodhar components. 
\medskip

\begin{enumerate}
\item $\mathbf{v}\preceq \mathbf{w}$ if and only if $\mc{R}_{\mathbf{v},\mathbf{w}}$ is non-empty,
\item if $\mathbf{v}\preceq \mathbf{w}$, then
\[
\mc{R}_{\mathbf{v},\mathbf{w}} \cong (k^*)^{|J_{\mathbf{v}}^\circ|} \times k^{|J_{\mathbf{v}}^-|}.
\]
\item if $v\leqslant w$, then 
\[
\mc{R}_{v,w} = \bigsqcup_{\mathbf{v}\preceq \mathbf{w}} \mc{R}_{\mathbf{v},\mathbf{w}}.
\]
\item each $\mc{R}_{\mathbf{v},\mathbf{w}}$ is $T$-stable;
\item the component $\mc{R}_{\mathbf{v}_+,\mathbf{w}}$ is dense in $\mc{R}_{v,w}$.
\end{enumerate}
The first three items follow directly from~\cite[Theorem 1.1 and Corollary 1.2]{Deodhar1985}. 
The fourth statement follows from the $T$-equivariance of the projection maps.
The fifth follows from the fact that $|J_{\mathbf{v}_+}^\circ| = \ell(w) - \ell(v)$.

\medskip

We close our preliminaries section by a summary of the notation that we use throughout the paper. 	
\medskip

\begin{center} 
\begin{tabular}{ll} 
\hline 
Symbol & Meaning \\ 
\hline $G$ & Semisimple algebraic group \\ 
$T$ & Maximal torus \\ 
$W$ & Weyl group \\ 
$R$ & Root system \\ 
$R^+, R^-$ & Positive / negative roots \\ 
$S$ & Simple roots \\ 
$s_i$ & Simple reflections \\ 
$B, B^-$ & Borel subgroup corresponding to $R^+$ / $R^-$\\ 
$P(\supset B)$ & Parabolic subgroup \\ 
$W_P$ & Weyl subgroup \\ 
$W^P$ & Minimal representatives of $W/W_P$ \\ 
$X_{wP}$ & Schubert variety \\ 
$X^{wP}$ & Opposite Schubert variety \\ 
$\tau_w$ & $T$-fixed point corresponding to $w\in W^P$\\
$\mathbf{w}$ & Expression for $w$ \\
$\mathbf{v}$ & Subexpression of $\mathbf{w}$ \\
$\mathbf{v} \preceq \mathbf{w}$ & Distinguished subexpression \\
$\mathbf{v}_+$ & Positive subexpression \\
$J_{\mathbf{w}}^+, J_{\mathbf{w}}^\circ, J_{\mathbf{w}}^-$ & Index sets of an expression \\
$\mc{R}_{v,w}$ & $U^- \tau_w \cap U \tau_v$ \\
$\mc{R}_{\mathbf{v},\mathbf{w}}$ & Deodhar component \\
$\pi^w_{w(j)}$ & Projection morphisms along $\mathbf{w}$ \\
\hline 
\end{tabular} 
\end{center}

\section{Fan associated to toric Schubert varieties in a full flag variety}
\label{sec:fans}

Recall that an element $w \in W$ is called a \emph{Coxeter-type element} if it is a product of distinct simple
reflections (not necessarily all), each appearing at most once, in some order.
We say that an element $c \in W$ is called a \emph{Coxeter element} if it can be written as a product of all simple reflections, each appearing exactly once, in some order.

It is well-known that a Schubert variety $X_{wB}$ in $G/B$ is toric for the action of a maximal torus $T$ of $G$ if and only if $w$ is a Coxeter-type element; see Karuppuchamy~\cite{Karuppuchamy2013} and also \cite{LeeMasudaPark2023}. 
We recovered Karuppuchamy's theorem in~\cite{CS2025} using Deodhar's decomposition of the intersection of double cosets. 
Since it is essential for the subsequent arguments of the current paper, we briefly recall that argument.

\begin{lemma}
\label{lem:toricSchubertinG/B}
The Schubert variety $X_{wB}$ is a toric $T$-variety if and only if $w$ is a Coxeter-type element.
\end{lemma}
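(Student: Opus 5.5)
We use the Deodhar decomposition of the open Richardson cell $\mc{R}_{1,w} = B\tau_w \cap B^-\tau_1$. Since $B^-\tau_1$ is open and dense in $G/B$ and contains $\tau_1 \in X_{wB}$, this cell is a nonempty open subset of $X_{wB}$. The Schubert variety $X_{wB}$ is normal, so it is a toric $T$-variety exactly when $T$ has a dense orbit. Because $T$ fixes $\tau_1$ and preserves $B\tau_w$, it acts on $\mc{R}_{1,w}$, and $T$ has a dense orbit in $X_{wB}$ if and only if it has one in $\mc{R}_{1,w}$.

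Fix a reduced expression $\mathbf{w}$ with factors $(s_{i_1},\dots,s_{i_r})$, where $r=\ell(w)$. For $v=1$ the positive subexpression is $\mathbf{1}_+ = (1,\dots,1)$, so $J^\circ_{\mathbf{1}_+}=\{1,\dots,r\}$. By properties (2) and (5) of Deodhar components, the dense component $\mc{R}_{\mathbf{1}_+,\mathbf{w}}$ is isomorphic to $(k^*)^r$ and is $T$-stable. Following Marsh--Rietsch, it has the explicit parametrization
\[
(t_1,\dots,t_r)\mapsto y_{i_1}(t_1)\cdots y_{i_r}(t_r)\,\tau_1,
\]
where $y_i(t)=x_{-\alpha_i}(t)$ are the negative root-group elements. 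We then compute the $T$-action in these coordinates. Conjugating $h\in T$ through the product gives $h\,y_{i_j}(t_j)\,h^{-1}=y_{i_j}(\alpha_{i_j}(h)^{-1}t_j)$, and $h$ fixes $\tau_1$. Hence $T$ acts on $(k^*)^r$ through the characters $-\alpha_{i_1},\dots,-\alpha_{i_r}$. Note that the same computation also directly confirms $T$-stability of the component.

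The action of $T$ on $(k^*)^r$ through characters $\chi_1,\dots,\chi_r$ has a dense orbit if and only if the homomorphism $T\to (k^*)^r$ is dominant. This holds if and only if $\chi_1,\dots,\chi_r$ are linearly independent in $X(T)_\R$. If $w$ is of Coxeter type, the $\alpha_{i_j}$ are distinct simple roots and hence independent, so there is a dense orbit. Since the component is dense in $\mc{R}_{1,w}$, which is dense in $X_{wB}$, that orbit is dense in $X_{wB}$ and $X_{wB}$ is toric. Conversely, suppose $X_{wB}$ is toric. Then $\dim X_{wB}=r$ must be at most $\dim T$, and more precisely the characters $\alpha_{i_1},\dots,\alpha_{i_r}$ must be independent. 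If $w$ is not of Coxeter type, then no reduced expression has distinct letters; indeed, every reduced expression of $w$ has the same support, and the letter count equals $\ell(w)$, so if some letter repeats in one reduced word it repeats in all. A repeated simple root gives a linear dependence, so no dense orbit exists. The only point needing care is that the support of $w$ is independent of the chosen reduced word, which follows from Matsumoto's theorem since braid relations preserve the set of letters occurring.

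The main obstacle is the equivariance computation. It requires showing that the Marsh--Rietsch isomorphism $\mc{R}_{\mathbf{1}_+,\mathbf{w}}\cong (k^*)^r$ intertwines the $T$-action with the diagonal action by the characters $-\alpha_{i_j}$. I would verify this directly by conjugating $h$ through the product, using that the $y_{i}(t)$ lie in root subgroups. One must also check that the orbit of a point with all $t_j\neq 0$ has dimension equal to the rank of the character lattice spanned by the $\alpha_{i_j}$.
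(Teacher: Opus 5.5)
Your proposal is correct and follows essentially the same route as the paper. You restrict to the dense Deodhar component $\mc{R}_{\mathbf{1}_+,\mathbf{w}}\cong (k^*)^r$, note that $T$ acts through the characters $-\alpha_{i_1},\dots,-\alpha_{i_r}$, and conclude that a dense orbit exists exactly when these roots are linearly independent, i.e.\ when the letters of the reduced word are distinct. You also make explicit two points the paper leaves implicit --- the normality of $X_{wB}$, and the fact that a repeated letter in one reduced word forces a repeated letter in every reduced word --- which tightens the argument without changing it.
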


\begin{proof}
Let $\mathbf{w}$ be a reduced expression for $w\in W$ with sequence of factors $(s_{i_1},s_{i_2},\ldots, s_{i_r})$. 
Let $\mathbf{1}_{+}=(1,\ldots, 1)$ denote the unique positive subexpression for $1$ in $\mathbf{w}$.
Then $J_{\mathbf{1}_+}^{\circ} = \{1,\ldots, r\}$.
We know that $\mc{R}_{1,w}$ is an open subset of $X_w$.
Hence, $\mc{R}_{1,w}$ is a toric variety if and only if the dense stratum $\mc{R}_{\mathbf{1}_{+}, \mathbf{w}}$ is a toric variety. 
Therefore, it suffices to analyze $\mc{R}_{\mathbf{1}_{+}, \mathbf{w}}$.

Following~\cite[\S 5.1]{MarshRietsch2004}, for $\mathbf{v}\preceq \mathbf{w}$ we define
\begin{equation*}
G_{\mathbf{v},\mathbf{w}} :=
\left\{
g=g_1g_2\cdots g_r \ \Bigg|\ 
\begin{matrix}
g_j = x_{i_j} (t_j) s_{i_j} &\text{if $j\in J^-_{\mathbf{v}}$} \\ 
g_j = y_{i_j} (a_j) &\text{if $j\in J^\circ_{\mathbf{v}}$} \\
g_j = s_{i_j} &\text{if $j\in J^+_{\mathbf{v}}$}
\end{matrix}
\right\},
\end{equation*}
where $a_j \in k^*$ and $t_j\in k$.
Here, $x_{i_j}$ (resp. $y_{i_j}$) denotes the root subgroup corresponding to $\alpha_{i_j}$ (resp. $-\alpha_{i_j}$).
Recall from the preliminaries section that, there is an isomorphism
$(k^*)^{ | J_{\mathbf{v}}^\circ|} \times k^{ | J_{\mathbf{v}}^-|}\;\cong\; \mc{R}_{\mathbf{v},\mathbf{w}}$. 
In our case, $\mathbf{v}=\mathbf{1}_+$ satisfies $J_{\mathbf{1}_+}^-=\emptyset$, so we obtain
$\mc{R}_{\mathbf{1}_+,\mathbf{w}} \cong (k^*)^r$. 
More precisely, we have 
\begin{align*}
\mc{R}_{\mathbf{1}_+,\mathbf{w}} 
&= \left\{  \prod_{m=1}^r y_{\alpha_{i_{m}}} (a_m)B \ \middle|\ (a_1,\dots, a_r)\in (k^*)^r \right\}.
\end{align*}
The $T$-action on $\mc{R}_{\mathbf{1}_+,\mathbf{w}}$  is given by conjugation:
\begin{equation}\label{A:theaction}
t\cdot \prod_{m=1}^r y_{\alpha_{i_m}}(a_m)B
=
\prod_{m=1}^r y_{\alpha_{i_m}}(\alpha_{i_m}(t)^{-1} a_m)B.
\end{equation}
This induces the action on $(k^*)^r$:
\[
t\cdot (a_1,\dots,a_r)
=
(\alpha_{i_1}(t)^{-1} a_1,\dots,\alpha_{i_r}(t)^{-1} a_r).
\]

Let $\xi\in \mc{R}_{\mathbf{1}_+,\mathbf{w}}$.
Then
\[
\mathrm{Stab}_T(\xi)=\bigcap_{m=1}^r \ker(\alpha_{i_m}).
\]
Thus,
\[
\dim T\cdot \xi = r
\quad \Longleftrightarrow \quad
\{\alpha_{i_1},\dots,\alpha_{i_r}\} \text{ are linearly independent}.
\]
Since $\dim X_{wB}=r$, this shows that $X_{wB}$ is toric if and only if the roots $\alpha_{i_m}$ are linearly independent.
This happens if and only if the simple reflections in $\mathbf{w}$ are pairwise distinct, that is, if and only if $w$ is a Coxeter-type element.
\end{proof}

We illustrate the torus action of the previous proof on a simple example. 

\begin{example}
Let $G=SL_3$ and $w=s_1s_2$. Then $r=2$ and
\[
\mathbf{w}=(1,s_1,s_1s_2).
\]
The positive subexpression for $1$ is $(1,1,1)$, hence
$\mc{R}_{\mathbf{1}_+,\mathbf{w}} 
= \{ y_{\alpha_1}(a_1)y_{\alpha_2}(a_2)B \mid a_1,a_2\in k^* \}
\cong (k^*)^2$. 
Then the maximal torus $T\subset SL_3$ acts on $\mc{R}_{\mathbf{1}_+,\mathbf{w}}$ by
\[
t\cdot(a_1,a_2)=(\alpha_1(t)^{-1}a_1,\alpha_2(t)^{-1}a_2),
\]
which is the standard diagonal action on $(k^*)^2$.
\end{example}

\section{Pasquier's description of fans of toric Schubert varieties}
\label{sec:Pasquier}

Let $\mathbf{w}$ be a reduced expression for $w\in W$ with sequence of factors $(s_{i_1},s_{i_2},\ldots, s_{i_r})$. The Bott--Samelson variety associated to $\mathbf{w}$ is defined by
\[
Z(\mathbf{w})
:= P_{\alpha_{i_1}} \times^{B} P_{\alpha_{i_2}} \times^{B} \cdots \times^{B} P_{\alpha_{i_r}} / B,
\]
that is,
\[
Z(\mathbf{w})
:= (P_{\alpha_{i_1}} \times \cdots \times P_{\alpha_{i_r}})/B^{r},
\]
where the action of $B^{r}$ on $P_{\alpha_{i_1}} \times \cdots \times P_{\alpha_{i_r}}$ is given by
\[
(p_1,\ldots,p_r)\cdot(b_1,\ldots,b_r)
=
(p_1 b_1,\,
b_1^{-1} p_2 b_2,\,
\ldots,\,
b_{r-1}^{-1} p_r b_r),
\]
for all $p_j \in P_{\alpha_{i_j}}$ and $b_j \in B$.

\medskip

It is well known that if $w$ is of Coxeter type, then for any reduced expression
$\mathbf{w}$ of $w$ the natural $B$-equivariant surjective birational morphism
\[
\pi_{\mathbf{w}} : Z(\mathbf{w}) \longrightarrow X_{wB}, \qquad [p_1,\ldots, p_r]\mapsto p_1\cdots p_rB,
\]
is an isomorphism. Consequently, the Schubert variety $X_{wB}$ is smooth
(see \cite{Fan1998}).

\begin{lemma}
\label{lem:BS-iso-coxeter}
Let $w \in W$ be a Coxeter-type element, and let $\mathbf{w}$ be any reduced expression of $w$.
Then the natural morphism
\[
\pi_{\mathbf{w}} : Z(\mathbf{w}) \longrightarrow X_{wB}
\]
is an isomorphism.
\end{lemma}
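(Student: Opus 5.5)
We will show that $\pi_{\mathbf{w}}$ is a bijective birational morphism onto a normal variety, and then conclude by Zariski's Main Theorem. Three facts are standard. First, $Z(\mathbf{w})$ is a smooth projective variety of dimension $r=\ell(w)$. Second, $\pi_{\mathbf{w}}$ is proper, surjective onto $X_{wB}$ and birational, because $\mathbf{w}$ is reduced. Third, Schubert varieties are normal. Hence, by Zariski's Main Theorem, it suffices to prove that $\pi_{\mathbf{w}}$ is injective, equivalently that it has finite fibres. A proper birational morphism with finite fibres onto a normal variety is finite and birational, and therefore an isomorphism.

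To prove injectivity we use the $B$-equivariance of $\pi_{\mathbf{w}}$ and the Bruhat decomposition $X_{wB}=\bigsqcup_{v\leqslant w} BvB/B$. It is enough to show that the fibre over each $T$-fixed point $vB$, $v\leqslant w$, is a single point. Indeed, every point of $X_{wB}$ has the form $b\cdot vB$, and the fibre over $b\cdot vB$ is $b$ times the fibre over $vB$.

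The fibre over $vB$ is a closed $T$-stable subvariety of $Z(\mathbf{w})$. If it were positive dimensional, its connected components, being projective, would contain $T$-fixed points, and at least one component would have several fixed points. A cleaner route is to show the fibre has no positive-dimensional components and contains exactly one $T$-fixed point; since every connected component contains a fixed point, the fibre is then a single point. The $T$-fixed points of $Z(\mathbf{w})$ are the $2^r$ points $[\epsilon_1,\dots,\epsilon_r]$ with $\epsilon_j\in\{1,\dot s_{i_j}\}$, and such a point maps to the product of the chosen subword. Since the $s_{i_j}$ are pairwise distinct, distinct subsets of $\{1,\dots,r\}$ give distinct elements of $W$. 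This follows, for instance, because a product of distinct simple reflections determines its support, and the order is forced by the order in $\mathbf{w}$. So each fibre over $vB$ contains exactly one $T$-fixed point, namely the one indexed by the unique subword equal to $v$.

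The main obstacle is ruling out a positive-dimensional fibre containing only one fixed point. Such a fibre would be a projective $T$-variety with an isolated fixed point, which is impossible once it has positive dimension: its fixed-point set would then contain at least two points, for example the source and the sink of a generic one-parameter subgroup. To make this airtight we use the Białynicki-Birula argument: a projective $T$-variety of positive dimension contains a $T$-stable curve, and every such curve is a $\mathbb{P}^1$ with two distinct fixed points. Alternatively, we can count fixed points. The total number of fixed points of $Z(\mathbf{w})$ equals $2^r=|[e,w]|$, the number of fixed points of $X_{wB}$, since $[e,w]$ is Boolean for Coxeter-type $w$. Because the Euler characteristics of $Z(\mathbf{w})$ and $X_{wB}$ coincide and fibres are connected (Zariski's Main Theorem on a normal target), each fibre has Euler characteristic $1$ with one fixed point. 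This, together with the curve argument, yields finite fibres and hence the isomorphism.
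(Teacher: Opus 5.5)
Your argument is correct and follows the paper's proof essentially step for step. Zariski's Main Theorem reduces the lemma to bijectivity, $B$-equivariance reduces bijectivity to the fibres over the fixed points $\tau_v$, and injectivity of $\varepsilon\mapsto u_\varepsilon$ leaves each fibre with exactly one $T$-fixed point; you get that injectivity from the support argument, while the paper proves it by induction on $r$.

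Your extra step rules out a connected positive-dimensional fibre with a single fixed point: a nontrivial one-parameter orbit closure in the fibre, inside an equivariant projective embedding of $Z(\mathbf{w})$, has two distinct limit points. This covers a case the paper treats too quickly. Two small corrections: $T$-stable curves need not be isomorphic to $\mathbb{P}^1$, though the two-fixed-point conclusion still holds, and the Euler-characteristic remark does not by itself rule out positive-dimensional fibres, so you can drop it.
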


\begin{proof}
Let $(s_{i_1},\dots,s_{i_r})$ be the factor sequence of $\mathbf{w}$. 
Since $\mathbf{w}$ is reduced, $w= s_{i_1}\cdots s_{i_r}$, and the morphism $\pi_{\mathbf{w}}$ is birational and projective.
We will show that it is bijective. Since $Z(\mathbf{w})$ is smooth and $X_{wB}$ is normal, it will
then follow from Zariski's Main Theorem that $\pi_{\mathbf{w}}$ is an isomorphism.

We first recall the $T$-fixed points of $Z(\mathbf{w})$. For $\varepsilon=(\varepsilon_1,\dots,\varepsilon_r)\in\{0,1\}^r$,
set
\[
p_\varepsilon := [(q_1,\dots,q_r)]\in Z(\mathbf{w}),
\qquad
q_j=
\begin{cases}
1,&\varepsilon_j=0,\\
\dot s_{i_j},&\varepsilon_j=1,
\end{cases}
\]
where $\dot s_{i_j}\in N_G(T)$ is a representative of $s_{i_j}$.
It is standard that the set of $T$-fixed points of $Z(\mathbf{w})$ is exactly
\[
Z(\mathbf{w})^T=\{p_\varepsilon\mid \varepsilon\in\{0,1\}^r\},
\]
and that
\[
\pi_{\mathbf{w}}(p_\varepsilon)=\tau_{u_\varepsilon},
\qquad
u_\varepsilon:=s_{i_1}^{\varepsilon_1}\cdots s_{i_r}^{\varepsilon_r}\in W.
\]

We claim that for every $u\leqslant w$, there is a unique $\varepsilon\in\{0,1\}^r$ such that
$u=u_\varepsilon$. 
Equivalently, every element of the interval $[e,w]$ is represented by a unique subword of the fixed reduced word $s_{i_1}\cdots s_{i_r}$.
We prove this by induction on $r=\ell(w)$. If $r=0$, there is nothing to prove. Assume $r\ge 1$ and write
\[
w=w's,\qquad w':=s_{i_1}\cdots s_{i_{r-1}},\ s:=s_{i_r}.
\]
Since $w$ is of Coxeter type, the simple reflection $s$ does not appear among
$s_{i_1},\dots,s_{i_{r-1}}$.

We first show that
\[
u< us\qquad\text{for every}\quad u\leqslant w'.
\]
Indeed, if $us<u$, then $s$ is a right descent of $u$, so some reduced expression of $u$ ends in $s$.
By the subword criterion, $\operatorname{supp}(u)$ is contained in $\operatorname{supp}(w')$, hence
$s\notin \operatorname{supp}(u)$. But $us<u$ would force $s\in \operatorname{supp}(u)$, a contradiction.

Now let $x\leqslant w$. 
By the subword criterion, $x$ is represented by a subword of
\[
s_{i_1}\cdots s_{i_{r-1}}s.
\]
If the last letter $s$ is not used, then $x\leqslant w'$. 
If the last letter is used, then $x=us$ for some $u\leqslant w'$. 
Hence
\[
[e,w]=[e,w']\sqcup \{us\mid u\leqslant w'\},
\]
and the union is disjoint because $us>u$ for every $u\leqslant w'$.
By the induction hypothesis, each $u\leqslant w'$ has a unique representing subword of
$s_{i_1}\cdots s_{i_{r-1}}$, and therefore each element of $[e,w]$ has a unique representing subword
of $s_{i_1}\cdots s_{i_r}$. 
This proves the claim.
It follows that $\pi_{\mathbf{w}}$ induces a bijection
\[
Z(\mathbf{w})^T \xrightarrow{\sim} (X_{wB})^T.
\]

We now show that every fiber of $\pi_{\mathbf{w}}$ consists of a single point. Let $v\leqslant w$.
The Bruhat cell $B\tau_v\subset X_{wB}$ is a $B$-orbit, and $\pi_{\mathbf{w}}$ is $B$-equivariant.
Hence for any $x=b\tau_v\in B\tau_v$ we have
\[
\pi_{\mathbf{w}}^{-1}(x)=
b\cdot \pi_{\mathbf{w}}^{-1}(\tau_v).
\]
Thus it is enough to compute the fiber over $\tau_v$.
But $\pi_{\mathbf{w}}^{-1}(\tau_v)$ is $T$-stable, because $\tau_v$ is $T$-fixed and
$\pi_{\mathbf{w}}$ is $T$-equivariant. Since $Z(\mathbf{w})$ is projective, every non-empty
$T$-stable closed subset contains a $T$-fixed point. Therefore, if
$\pi_{\mathbf{w}}^{-1}(\tau_v)$ contained more than one point, it would contain at least two
$T$-fixed points. This is impossible because we already proved that
$\pi_{\mathbf{w}}:Z(\mathbf{w})^T\to (X_{wB})^T$ is bijective. Hence
\[
\pi_{\mathbf{w}}^{-1}(\tau_v)
\]
is a singleton, and therefore every fiber over the cell $B\tau_v$ is a singleton.
Since
\[
X_{wB}=\bigsqcup_{v\leqslant w} B\tau_v,
\]
it follows that $\pi_{\mathbf{w}}$ is bijective.

Finally, a projective bijective morphism is quasi-finite, hence finite. Since $X_{wB}$ is normal and
$\pi_{\mathbf{w}}$ is finite and birational, Zariski's Main Theorem implies that
$\pi_{\mathbf{w}}$ is an isomorphism.
\end{proof}

\medskip

Pasquier~\cite{Pasquier2010} reformulated the Grossberg and Karshon's work~\cite{GrossbergKarshon1994} in the setting of Bott--Samelson varieties. 
In this section, we recall the description of the fan of toric Schubert varieties following~\cite{Pasquier2010}.

\medskip

Let $w$ be a Coxeter-type element in $W$. By Lemma~\ref{lem:toricSchubertinG/B}, the Schubert variety $X_{wB}$ is a toric $T$-variety. In particular, the open Deodhar component $\mc{R}_{\mathbf{1}_+,\mathbf{w}}$ identifies with the open $T$-orbit in $X_{wB}$.
Using the parametrization from the previous section, we have
\[
\mc{R}_{\mathbf{1}_+,\mathbf{w}}
=
\left\{
\prod_{m=1}^r y_{\alpha_{i_m}}(a_m)B \ \middle|\ (a_1,\dots,a_r)\in (k^*)^r
\right\}.
\]
We now consider the following homomorphism of algebraic groups
\[
\varphi: T \longrightarrow (k^*)^r, \qquad
t \longmapsto (\alpha_{i_1}(t)^{-1},\ldots, \alpha_{i_r}(t)^{-1}).
\]
Since the roots $\alpha_{i_1},\dots,\alpha_{i_r}$ are linearly independent, the map $\varphi$ is surjective. 
Hence the $T$-action on $X_{wB}$ factors through an effective action of the quotient torus
\[
T / \bigcap_{j=1}^r \ker(\alpha_{i_j}) \;\cong\; (k^*)^r.
\]
Thus the character lattice of the acting torus is identified with the lattice generated by
\[
\{-\alpha_{i_1},\dots,-\alpha_{i_r}\} \subset X(T).
\]
For every root $\alpha$, there exists a one-parameter subgroup
$u_{\alpha} : k \longrightarrow U_{\alpha}$
such that
\[
t\,u_{\alpha}(x)\,t^{-1} = u_{\alpha}(\alpha(t)x).
\]
Moreover,
\begin{equation}\label{eq4.2}
n_{\alpha} := u_{\alpha}(1)\,u_{-\alpha}(-1)\,u_{\alpha}(1)
\end{equation}
represents the reflection $s_\alpha$ of $W$ in $N_G(T)$.
We also recall the identity
\begin{equation}\label{eq4.4}
n_{-\alpha}\,u_{-\alpha}(-x)
=
u_{-\alpha}(x^{-1})\,\alpha^{\vee}(x)\,u_{\alpha}(-x^{-1}),
\end{equation}
which will be used below.

\medskip

Let $\mathbf{w}$ be a reduced expression for $w$ with sequence $(s_{i_1},\dots,s_{i_r})$. 
For $1\le m,j\le r$, define
\[
\beta_{i_m, i_j} := \langle \alpha_{i_m}^\vee, \alpha_{i_j} \rangle.
\]

\medskip

\begin{theorem}
\label{generatingmatrix}
Let $w \in W$ be a Coxeter-type element with reduced expression
\[
w = s_{i_1}s_{i_2}\cdots s_{i_r}.
\]
Then the fan of the toric Schubert variety $X_{wB}$ is isomorphic to the fan in $\mathbb{R}^r$ whose primitive ray generators are the columns of the matrix
\[
[e_1^{+}\ \cdots\ e_r^{+}\ e_1^{-}\ \cdots\ e_r^{-}],
\]
where
\[
e_k^+ = \text{standard basis vectors}, \qquad
e_k^- = -e_k^{+} - \sum_{m > k} \beta_{i_k,i_m} e_m^{+}.
\]
For a subset $J \subseteq \{1,\dots,r\}$, the maximal cone corresponding to the fixed point $\tau_v$, where 
$v = \prod_{j\in J} s_{i_j}$, 
is given by
\[
C_v
=
\mathrm{Cone}\big(\{e_k^+ \mid k \notin J\} \cup \{e_k^- \mid k \in J\}\big).
\]
\end{theorem}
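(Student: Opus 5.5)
Since $w$ is of Coxeter type, Lemma~\ref{lem:BS-iso-coxeter} identifies $X_{wB}$ with the Bott--Samelson variety $Z(\mathbf{w})$ $T$-equivariantly. The plan is therefore to compute the fan of $Z(\mathbf{w})$ through the iterated $\mathbb{P}^1$-bundle structure. We use the open $T$-orbit $\mc{R}_{\mathbf{1}_+,\mathbf{w}}$ of Lemma~\ref{lem:toricSchubertinG/B} and the torus $(k^*)^r$ with coordinates $(a_1,\dots,a_r)$. Via $\varphi$, this torus has character lattice generated by $-\alpha_{i_1},\dots,-\alpha_{i_r}$, and its cocharacter lattice is identified with $\Z^r$ with dual basis $e_1^+,\dots,e_r^+$.

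For each $\varepsilon\in\{0,1\}^r$ (equivalently, each subset $J=\{k:\varepsilon_k=1\}$) we build an affine chart around the fixed point $p_\varepsilon$. It is the image of $k^r$ under
\[
(x_1,\dots,x_r)\mapsto [g_1,\dots,g_r],\qquad g_k=
\begin{cases} u_{-\alpha_{i_k}}(x_k), & k\notin J,\\ n_{\alpha_{i_k}}\,u_{\alpha_{i_k}}(x_k)\ \text{(suitably normalized)}, & k\in J.\end{cases}
\]
These $2^r$ charts are $T$-stable affine spaces and cover $Z(\mathbf{w})$. Each is a $T$-stable open neighborhood of the corresponding fixed point, so by toric theory each corresponds to a smooth maximal cone. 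That cone is spanned by the dual basis to the weights of the coordinates $x_k$, read as functions on the open orbit. It suffices to express each chart coordinate $x_k$ as a Laurent monomial in $a_1,\dots,a_r$. Then the cone $C_v$ is the dual of the cone generated by these exponent vectors.

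The key computation is this change of coordinates, done by induction from left to right. On the open orbit, a point is $[y(a_1),\dots,y(a_r)]$ with $y=u_{-\alpha}$. For $k\in J$ we rewrite $y_{i_k}(a_k)$ using \eqref{eq4.4}. It becomes $n\cdot u_{-\alpha_{i_k}}(\pm a_k^{-1})$ times $\alpha_{i_k}^\vee(\pm a_k)\,u_{\alpha_{i_k}}(\cdots)$. The factor $u_{\alpha_{i_k}}(\cdots)$ lies in $B$, and the torus factor $\alpha_{i_k}^\vee(a_k)$ must be pushed to the right through the $B^r$-action. Passing $\alpha_{i_k}^\vee(a_k)$ past $y_{i_m}(a_m)$ for $m>k$ multiplies $a_m$ by $a_k^{-\beta_{i_k,i_m}}$, using $t\,u_{-\alpha}(x)t^{-1}=u_{-\alpha}(\alpha(t)^{-1}x)$ and $\alpha_{i_m}(\alpha_{i_k}^\vee(a))=a^{\beta_{i_k,i_m}}$. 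The unipotent factor $u_{\alpha_{i_k}}$ absorbed into $B$ does not change the next coordinates to leading order. More precisely, since the $\alpha_{i_m}$ are distinct simple roots, conjugating $u_{\alpha_{i_k}}$ past $u_{-\alpha_{i_m}}$ gives commuting or $B$-absorbable terms; this needs to be checked. The outcome is that chart coordinates are $x_k=a_k$ (up to the modifications from earlier $J$ indices) for $k\notin J$, and $x_k=a_k^{-1}\cdot\prod$ for $k\in J$. Reading off exponent vectors and dualizing gives the generator $e_k^+$ for $k\notin J$ and $e_k^-=-e_k^+-\sum_{m>k}\beta_{i_k,i_m}e_m^+$ for $k\in J$. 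This matches the statement, with $C_v$ for $v=\prod_{j\in J}s_{i_j}=u_\varepsilon$ by the fixed-point identification $\pi_{\mathbf{w}}(p_\varepsilon)=\tau_{u_\varepsilon}$.

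The main obstacle is this transition-function bookkeeping. Two things must be settled: the exact signs and normalizations from \eqref{eq4.2}--\eqref{eq4.4}, which are harmless since signs do not affect exponent vectors, and the fact that the dual-basis computation yields exactly $e_k^-$ with only the $m>k$ correction terms. I would first do it for $|J|=1$, where the triangular structure is transparent. For general $J$, I would then argue that the exponent matrix is unitriangular, so that its inverse transpose reproduces the stated generators; this works because corrections from different $k\in J$ affect only later coordinates. Finally, the union of the $2^r$ smooth cones is complete, since $Z(\mathbf{w})$ is projective. Rays are primitive because each cone is unimodular.
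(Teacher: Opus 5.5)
Your proposal is correct in outline and is essentially the paper's argument. Both use charts around the $2^r$ fixed points built from $n_{\pm\alpha_{i_k}}$ and $u_{-\alpha_{i_k}}$, use \eqref{eq4.4} to push each torus factor $\alpha_{i_k}^\vee(\cdot)$ to the right so that only the later coordinates $a_m$ ($m>k$) pick up the exponent $-\beta_{i_k,i_m}$, and then dualize the resulting monomial change of coordinates.

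One slip to fix: for $k\in J$ the chart factor should be $n_{-\alpha_{i_k}}u_{-\alpha_{i_k}}(\pm x_k)$, as in the paper's $\phi_\varepsilon$ (equivalently $u_{\alpha_{i_k}}(x_k)\,n_{\alpha_{i_k}}$). In $n_{\alpha_{i_k}}u_{\alpha_{i_k}}(x_k)$ the factor $u_{\alpha_{i_k}}(x_k)\in B$ can be moved through the later slots and absorbed, so that map does not depend on $x_k$; your actual computation already uses the correct form. With the correct chart one gets $a_m=\pm x_m^{\epsilon_m}\prod_{k<m,\,k\in J}x_k^{-\beta_{i_k,i_m}}$, where $\epsilon_m=-1$ for $m\in J$ and $\epsilon_m=1$ otherwise. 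So the exponents of $x_k$ in $a_1,\dots,a_r$ are exactly the coordinates of $e_k^{\pm}$, and you need not invert your triangular exponent matrix (whose diagonal is $\pm1$, so it is not unitriangular).
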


\begin{proof}
For each $\varepsilon = (\varepsilon_1,\dots,\varepsilon_r) \in \{0,1\}^r$, define a map
\[
\phi_{\varepsilon} : k^r \longrightarrow X_{wB}
\]
by
\[
\phi_{\varepsilon}(x_1,\ldots,x_r)
=
\prod_{j=1}^r
(n_{-\alpha_{i_j}})^{\varepsilon_j}
u_{-\alpha_{i_j}}\bigl((-1)^{\varepsilon_j}x_j\bigr)B.
\]
Each $\phi_{\varepsilon}(k^r)$ is a $T$-stable affine open subset of $X_{wB}$, and these open subsets cover $X_{wB}$.

\medskip

We proceed to describe the coordinates on the open subset corresponding to $\varepsilon=(0,\dots,0)$.
In this case,
\[
\phi_{(0,\dots,0)}(x_1,\dots,x_r)
=
u_{-\alpha_{i_1}}(x_1)\cdots u_{-\alpha_{i_r}}(x_r)B.
\]
This identifies $\phi_{(0,\dots,0)}(k^r)$ with affine space $k^r$, with coordinate functions
\[
f_i\bigl(u_{-\alpha_{i_1}}(x_1)\cdots u_{-\alpha_{i_r}}(x_r)B\bigr)=x_i.
\]
We now describe how the coordinates change when passing to a general chart $\phi_{\varepsilon}(k^r)$.
Fix $\varepsilon \in \{0,1\}^r$ and write a point in $\phi_{\varepsilon}(k^r)$ as
\[
\prod_{j=1}^r
(n_{-\alpha_{i_j}})^{\varepsilon_j}
u_{-\alpha_{i_j}}\bigl((-1)^{\varepsilon_j}x_j\bigr)B.
\]
Using the relations among the root subgroups and the elements $n_{-\alpha}$ (in particular the identities recalled above), we can successively move all factors $n_{-\alpha_{i_j}}$ to the right. 
This expresses the point in the form
\[
u_{-\alpha_{i_1}}(x_1')\cdots u_{-\alpha_{i_r}}(x_r')B,
\]
where the new coordinates $x_j'$ are given by
\[
x_j' = x_j^{(-1)^{\varepsilon_j}} \prod_{m < j} x_m^{(-1)^{\varepsilon_j}\beta_{i_m, i_j}}.
\]
Thus, in the chart $\phi_{\varepsilon}(k^r)$, the coordinate functions $f_i$ can be expressed in terms of $(x_1,\dots,x_r)$ via these transformations.

Let $\chi_1,\dots,\chi_r$ denote the standard basis of the character lattice of $(k^*)^r$, and let $e_1^+,\dots,e_r^+$ be the dual basis.
Consider a character $\chi=\sum_{i=1}^r k_i \chi_i$ and the corresponding monomial
\[
\prod_{i=1}^r f_i^{k_i}.
\]
We determine when this function is regular on $\phi_{\varepsilon}(k^r)$.

\medskip

If $\varepsilon_i=0$, then $x_i$ appears with nonnegative exponent in the expression of $f_i$, and regularity requires
\[
k_i \ge 0.
\]
If $\varepsilon_i=1$, then $x_i$ appears with exponent $-1$ in $x_i'$, and additional contributions arise from variables $x_j$ with $j>i$ through the transformation formula. A direct inspection shows that regularity is equivalent to
\[
- k_i - \sum_{j>i} \beta_{i_i,i_j} k_j \ge 0.
\]
These inequalities define a rational polyhedral cone in the dual space, whose primitive generators are
\[
\{e_k^+ \mid \varepsilon_k=0\} \cup \{e_k^- \mid \varepsilon_k=1\},
\]
where
\[
e_k^- = -e_k^{+} - \sum_{m > k} \beta_{i_k,i_m} e_m^{+}.
\]

\medskip

Therefore, the cones corresponding to the affine charts $\phi_{\varepsilon}(k^r)$ are precisely those described in the statement, and they form the fan of $X_{wB}$.
\end{proof}

\medskip

\begin{example}
\label{ex:fanofHirzebruch}
Let $G=SL_3$ and $w=s_1s_2$. Then $r=2$, and $\beta_{i_1,i_2}=\langle \alpha_1^\vee,\alpha_2\rangle=-1$.
Hence, we have 
\[
e_1^+=(1,0), \qquad  e_2^+=(0,1),
\]
and
\[
e_1^-=-e_1^{+}-\beta_{i_1,i_2}e_2^{+}=(-1,1), \qquad
e_2^-=(0,-1).
\]
Therefore the fan of $X_{wB}$ has rays
\[
(1,0),\ (0,1),\ (-1,1),\ (0,-1),
\]
with maximal cones
\[
\mathrm{Cone}(e_1^+,e_2^+),\quad
\mathrm{Cone}(e_1^-,e_2^+),\quad
\mathrm{Cone}(e_1^+,e_2^-),\quad
\mathrm{Cone}(e_1^-,e_2^-).
\]
This is the standard fan of the Hirzebruch surface $\mathbb{F}_{1}$ obtained by the blow-up of $\mathbb{P}^2$ at a torus-fixed point.
\end{example}

\subsection{Partial flag varieties}

We now consider the toric Schubert varieties in partial flag varieties $G/P$, where $P \supset B$ is a parabolic subgroup.
Consider the natural projection \[\pi_{P} : G/B \longrightarrow G/P.\] 
This map is $G$-equivariant, in particular, both $B$ (resp. $T$)-equivariant.
Let $w \in W^{P}$. 
The maximal torus $T$ acts on $X_{wP}$ by left multiplication.
The restriction of $\pi_{P}$ to the Schubert cell $BwB/B$ induces a $B$-equivariant isomorphism:
\begin{equation}\label{eq:celliso}
\pi_{P} : BwB/B \xrightarrow{\;\sim\;} BwP/P .
\end{equation}

\medskip

\begin{example}
Let $G=SL_3$ and let $P=P_{\widehat{\alpha_1}}$ be the maximal parabolic subgroup corresponding to the simple root $\alpha_1$. Then $G/P \cong \mathbb{P}^2$.
The Weyl group is $W=S_3$, and $W^P = \{1,s_1,s_2s_1\}$.
For $w=s_2s_1$, the Schubert variety $X_{wP}$ is the whole projective plane $\mathbb{P}^2$, while for $w=s_1$, the Schubert variety $X_{wP}$ is a projective line in $\mathbb{P}^2$.
\end{example}

\medskip
We now record an expected result whose proof is not present in the literature.

\begin{proposition}
\label{prop:toric}
Let $w\in W^{P}$. Then the Schubert variety $X_{w}^{P}\subset G/P$ is toric for the action of $T$ if and only if $w$ is a Coxeter-type element.
\end{proposition}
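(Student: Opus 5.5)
We plan to reduce the statement to the full flag case, Lemma~\ref{lem:toricSchubertinG/B}, using the cell isomorphism \eqref{eq:celliso}. Since $w\in W^P$, the projection $\pi_P$ restricts to a $B$-equivariant, and in particular $T$-equivariant, isomorphism $BwB/B\xrightarrow{\sim} BwP/P$. The cell $BwP/P$ is open and dense in $X_{wP}$, and $BwB/B$ is open and dense in $X_{wB}$. Hence $X_{wP}$ has a dense $T$-orbit if and only if $X_{wB}$ does: a $T$-orbit is dense in one variety if and only if its image or preimage under this equivariant isomorphism of dense open subsets is dense in the other. Normality and irreducibility of Schubert varieties in $G/P$ are standard, so being toric is equivalent to having a dense $T$-orbit. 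Combined with Lemma~\ref{lem:toricSchubertinG/B}, this gives the equivalence.

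To keep the argument self-contained in the Deodhar language, we can also make it explicit. Fix a reduced expression $\mathbf{w}$ with factors $(s_{i_1},\dots,s_{i_r})$; here $r=\ell(w)=\dim X_{wP}$ because $w\in W^P$. The open Deodhar stratum $\mc{R}_{\mathbf{1}_+,\mathbf{w}}$ lies in the big cell $BwB/B$, which is dense in $X_{wB}$. Its image in $G/P$ is
\[
\left\{\prod_{m=1}^r y_{\alpha_{i_m}}(a_m)P \;\middle|\; a\in (k^*)^r\right\}.
\]
This image is a $T$-stable, dense subset of $X_{wP}$, isomorphic to $(k^*)^r$, with the same $T$-action $a_m\mapsto \alpha_{i_m}(t)^{-1}a_m$. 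The stabilizer computation from the proof of Lemma~\ref{lem:toricSchubertinG/B} then applies verbatim. A $T$-orbit in this set has dimension $r$ exactly when $\alpha_{i_1},\dots,\alpha_{i_r}$ are linearly independent, which happens exactly when the $s_{i_m}$ are pairwise distinct.

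The main point to verify is that $\mc{R}_{\mathbf{1}_+,\mathbf{w}}$ really sits inside $BwB/B$, so that the isomorphism \eqref{eq:celliso} applies. We will argue as follows. The stratum is contained in $B\tau_w$ by definition of $\mc{R}_{1,w}=B\tau_w\cap B^-\tau_1$, and its image under $\pi_P$ is therefore dense in $BwP/P$. A secondary point is the direction \emph{toric $\Rightarrow$ Coxeter type}: a dense $T$-orbit in $X_{wP}$ meets the dense open subset $\pi_P(\mc{R}_{\mathbf{1}_+,\mathbf{w}})$. That subset is $T$-stable, so the dense orbit lies inside it, and this forces $r$-dimensional orbits there, hence linear independence of the roots.

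Finally, Coxeter type is a property of $w$ itself, not of the chosen reduced word: by Matsumoto's theorem every reduced word of $w$ has the same support. Since $w\in W^P$ and the projection of $w$ to $W/W_P$ involves no further choice, no ambiguity arises.
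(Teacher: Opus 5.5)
Your proposal is correct and follows essentially the same route as the paper: you transport $T$-orbits through the $T$-equivariant cell isomorphism $\pi_P\colon BwB/B\xrightarrow{\sim} BwP/P$ and then invoke Lemma~\ref{lem:toricSchubertinG/B}. The differences are cosmetic. The paper shows the transported orbit is open via the dimension count $\dim T\cdot\xi_0=\ell(w)=\dim X_{wP}$, whereas you argue via density, and your explicit Deodhar computation in $G/P$ is an optional unpacking of the same stabilizer argument.
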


\begin{proof}
Let
\[
\pi_P:G/B\to G/P
\]
be the natural projection. Since $w\in W^{P}$, the restriction of $\pi_P$ to the Schubert cell is a $B$-equivariant, hence $T$-equivariant, isomorphism
\[
\psi:=\pi_P\big|_{BwB/B}: BwB/B \xrightarrow{\sim} BwP/P.
\]
Both cells have dimension $\ell(w)$.

First assume that $w$ is of Coxeter type. 
By Lemma~\ref{lem:toricSchubertinG/B}, $X_{wB}$ is a toric $T$-variety. 
In particular, the dense Deodhar component
\[
\mathcal{R}_{{\bf 1}^{+},{\bf w}}\subset BwB/B
\]
is the open $T$-orbit in $X_{wB}$. Choose $\xi\in \mathcal{R}_{{\bf 1}^{+},{\bf w}}$, and set
\[
\xi_0:=\psi(\xi)\in BwP/P\subset X_{wP}.
\]
Since $\psi$ is $T$-equivariant, it restricts to an isomorphism
\[
\psi: T\cdot \xi \xrightarrow{\sim} T\cdot \xi_0.
\]
Hence
\[
\dim(T\cdot \xi_0)=\dim(T\cdot \xi)=\ell(w).
\]
On the other hand, $\dim X_{wP}=\ell(w)$. Therefore $T\cdot \xi_0$ is a locally closed irreducible subset of $X_{wP}$ having the same dimension as $X_{wP}$. It follows that $T\cdot \xi_0$ is open in $X_{wP}$, hence dense. 
Thus $X_{wP}$ is toric.

Conversely, assume that $X_{wP}$ is toric for the action of $T$. Let $O_P\subset X_{wP}$ be its dense open $T$-orbit. Since the Schubert cell $BwP/P$ is an open dense subset of $X_{wP}$, the intersection
\[
O_P\cap BwP/P
\]
is non-empty and open in $BwP/P$. 
We fix a point
\[
\xi_0\in O_P\cap BwP/P.
\]
Then $O_P=T\cdot \xi_0$, and since $O_P$ is open dense in $X_{wP}$,
\[
\dim(T\cdot \xi_0)=\dim X_{wP}=\ell(w).
\]
Let
\[
\xi:=\psi^{-1}(\xi_0)\in BwB/B.
\]
Again, since $\psi$ is $T$-equivariant, it induces an isomorphism of $T$-orbits
\[
\psi: T\cdot \xi \xrightarrow{\sim} T\cdot \xi_0.
\]
Therefore
\[
\dim(T\cdot \xi)=\dim(T\cdot \xi_0)=\ell(w).
\]
But $X_{wB}$ is irreducible of dimension $\ell(w)$. 
Hence the orbit $T\cdot \xi\subset X_{wB}$ is a locally closed subset of dimension equal to $\dim X_{wB}$, so it is open in $X_{wB}$. Consequently its closure is all of $X_{wB}$, and $X_{wB}$ is toric for the action of $T$.
By the full flag case lemma \ref{lem:toricSchubertinG/B}, $X_{wB}$ is toric if and only if $w$ is of Coxeter type.  
Therefore $w$ is of Coxeter type.
\end{proof}

\medskip

\begin{proposition}\label{cone}
Let $w \in W^P$ be such that the Schubert variety $X_{wP}$ in $G/P$ is toric for the action of $T$. Then for every $v\in W^P$ satisfying $v \leqslant w$, the corresponding maximal cone $C_{v}^{P}$ in the fan $\Sigma_w^{P}$ of $X_{wP}$ is obtained by merging the full-flag cones in the same $W_P$-coset:
\[
C_{v}^{P}
=
\bigcup_{\substack{u\le w\\ uW_{P}=vW_{P}}} C_{u}^{B}.
\]
Here the union is understood as equality of supports of the merged cone in the coarsened fan.
\end{proposition}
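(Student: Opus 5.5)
The plan is to realize $\pi_P$ restricted to $X_{wB}$ as a toric morphism and read off the fan $\Sigma_w^P$ as a coarsening of the fan $\Sigma_w^B$ of Theorem~\ref{generatingmatrix}. Since $w\in W^P$ is of Coxeter type (Proposition~\ref{prop:toric}), both $X_{wB}$ and $X_{wP}$ are toric. The restriction
\[
\pi:=\pi_P|_{X_{wB}}:X_{wB}\to X_{wP}
\]
is $T$-equivariant, surjective and birational. It restricts to an isomorphism on the cells by~\eqref{eq:celliso}, so it maps the open $T$-orbit $\mathcal{R}_{\mathbf{1}_+,\mathbf{w}}$ isomorphically onto the open $T$-orbit of $X_{wP}$; this is the argument in the proof of Proposition~\ref{prop:toric}. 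Hence the two acting quotient tori are identified, with the same cocharacter lattice $N\cong\mathbb{Z}^r$, and $\pi$ is a proper birational toric morphism inducing the identity on $N$. By the standard dictionary for toric morphisms (\cite{CLS}), $\Sigma_w^B$ refines $\Sigma_w^P$ and the supports agree; both fans are complete because both varieties are projective. Moreover, every cone of $\Sigma_w^B$ lies in a unique smallest cone of $\Sigma_w^P$, namely the one indexing the $T$-orbit of $X_{wP}$ containing the image of the corresponding orbit.

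Next I will identify which maximal cones of $\Sigma_w^B$ lie in a given maximal cone $C_v^P$. Maximal cones correspond to $T$-fixed points. For $u\leqslant w$, the fixed point $\tau_u\in X_{wB}$ maps to $\tau_{uW_P}$, so the orbit of $C_u^B$ maps to the fixed point $\tau_{v}$ with $v$ the minimal representative of $uW_P$. By the dictionary above, $C_u^B\subseteq C_v^P$ exactly when $\pi(\tau_u)=\tau_v$, that is, when $uW_P=vW_P$. Every $v\in W^P$ with $v\leqslant w$ occurs in this way, since $v\leqslant w$ in $W$. Because the fans have the same support and each maximal cone $C_u^B$ lies in exactly one maximal cone of $\Sigma_w^P$, the maximal cones of $\Sigma_w^B$ partition the support and we get
\[
C_v^P=\bigcup_{u\le w,\ uW_P=vW_P} C_u^B .
\]

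The main obstacle is the precise toric-morphism step: showing that $C_v^P$ is exactly covered by the $C_u^B$ with $\pi(\tau_u)=\tau_v$. To prove this I will use the affine charts. The $T$-stable affine open $U_v\subset X_{wP}$ around $\tau_v$ consists of the points whose $T$-orbit closure contains $\tau_v$; equivalently, it is the toric chart of $C_v^P$. Its preimage $\pi^{-1}(U_v)$ is the toric subvariety of $X_{wB}$ whose fan consists of the cones of $\Sigma_w^B$ mapping into $C_v^P$. Its maximal cones correspond to the fixed points of $\pi^{-1}(U_v)$, and these are exactly the $\tau_u$ with $\pi(\tau_u)=\tau_v$, because a fixed point of $X_{wB}$ maps to a fixed point of $U_v$ and $\tau_v$ is the only one. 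Properness of $\pi$ gives $|\Sigma_{\pi^{-1}(U_v)}|=C_v^P$, which is the desired equality of supports. If needed, I will check the charts explicitly with the maps $\phi_\varepsilon$ from the proof of Theorem~\ref{generatingmatrix}.
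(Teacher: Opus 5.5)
Your proposal is correct and follows essentially the same route as the paper: you identify the lattices through the isomorphism of open $T$-orbits, so that $\pi_P|_{X_{wB}}$ is a toric morphism inducing the identity on $N$, and then use the orbit--cone correspondence to get $C_u^B\subseteq C_v^P$ exactly when $\pi_P(\tau_u)=\tau_v$, i.e.\ $uW_P=vW_P$. Your additional use of properness (equal supports, or $|\Sigma_{\pi^{-1}(U_v)}|=C_v^P$) makes explicit the covering step that the paper's proof leaves implicit when it concludes that the union of these cones equals $C_v^P$.
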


\begin{proof}
Since $X_{wP}$ is toric for the action of $T$, Proposition~\ref{prop:toric} implies that $w$ is a Coxeter-type element. Hence $X_{wB}$ is also a toric $T$-variety.
Let $\Sigma_w^B$ and $\Sigma_w^{P}$ denote the fans of $X_{wB}$ and $X_{wP}$, respectively. Consider the restriction
\[
\pi_{P}|_{X_{wB}} : X_{wB} \longrightarrow X_{wP}.
\]
This is a $T$-equivariant morphism between toric varieties.

\medskip

Let $\xi \in X_{wB}$ lie in the open $T$-orbit, and set $\xi' := \pi_{P}(\xi)$. Then the induced map
\[
T \cdot \xi \longrightarrow T \cdot \xi'
\]
is an isomorphism of tori. Consequently, the induced maps on character and cocharacter lattices are isomorphisms, and we may identify the lattices of the two tori.
Thus $\pi_{P}|_{X_{wB}}$ induces a map of fans
\[
\Sigma_w^B \longrightarrow \Sigma_w^{P}
\]
which preserves the ambient lattice.

For each maximal cone $C_u^B \in \Sigma_w^B$, there exists a unique maximal cone $C_v^{P} \in \Sigma_w^{P}$ such that
\[
C_u^B \subseteq C_v^{P}.
\]
By the orbit--cone correspondence for toric varieties (see \cite[Chapter~3]{CLS}), this inclusion holds if and only if the corresponding $T$-fixed points satisfy
\[
\pi_{P}(uB) = vP.
\]
Therefore, we have 
\[
C_v^{P}
=
\bigcup_{\substack{u \leqslant w \\ \pi_{P}(uB)=vP}} C_u^B.
\]
Since $\pi_{P}(uB)=vP$ if and only if $uW_P = vW_P$, the result follows.
\end{proof}

\medskip

\begin{example}
Let $G=SL_3$ and let $P=P_{\widehat{\alpha_2}}$ be the maximal parabolic subgroup corresponding to the simple root $\alpha_2$.
Then $G/P \cong \mathbb P^2$. Let $w=s_1s_2\in W^{P}$, which is of Coxeter type. By Example~\ref{ex:fanofHirzebruch}
the fan $\Sigma_w^B$ is the fan of the Hirzebruch surface with rays spanned by 
\[
(1,0),\ (0,1),\ (-1,1),\ (0,-1).
\]
The projection $\pi_P$ identifies maximal cones corresponding to Weyl group elements
in the same coset modulo $W_P=\langle s_1\rangle$. Concretely, the two cones in the same
$W_P$-coset meet along the ray $(0,1)$ and are merged; the resulting fan has rays spanned by 
\[
(1,0),\ (0,-1),\ (-1,1),
\]
which is the complete fan of $\mathbb P^2$. Thus $\Sigma_w^P$ is the fan of
$\mathbb P^2$.
\end{example}

\section{Interval Characterization}
\label{sec:intervals}

From a combinatorial point of view, a Coxeter-type element is a word with no repeated
letters. 
In Bruhat order, elements below a Coxeter-type element are obtained uniquely by choosing a subset of the letters in a fixed reduced expression. 
The next proposition identifies toric Schubert varieties in $G/P$ by Boolean intervals in $W$.

\begin{proposition}
\label{prop:toric-boolean}
Let $w\in W^P$. Then the following are equivalent:
\begin{enumerate}
\item $X_{wP}$ is a toric variety for the action of $T$;
\item $w$ is a Coxeter-type element;
\item the lower Bruhat interval $[e,w]_W$ is isomorphic to the Boolean lattice
$B_{\ell(w)}$ of rank $\ell(w)$.
\end{enumerate}
\end{proposition}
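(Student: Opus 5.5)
The equivalence (1)$\Leftrightarrow$(2) is exactly Proposition~\ref{prop:toric}, so the plan is to establish (2)$\Leftrightarrow$(3), working entirely inside $W$; the condition $w\in W^P$ plays no role in this part.

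For (2)$\Rightarrow$(3), fix a reduced expression $w=s_{i_1}\cdots s_{i_r}$ with pairwise distinct letters. The claim in the proof of Lemma~\ref{lem:BS-iso-coxeter} shows that $\varepsilon\mapsto u_\varepsilon=s_{i_1}^{\varepsilon_1}\cdots s_{i_r}^{\varepsilon_r}$ is a bijection $\{0,1\}^r\to[e,w]$. It remains to check that this bijection is an order isomorphism when $\{0,1\}^r$ carries the componentwise order. Each subword $u_\varepsilon$ has distinct letters, so it is reduced, because distinct simple roots are linearly independent and a Coxeter-type word therefore cannot be shortened. Hence $\ell(u_\varepsilon)=|\varepsilon|$.

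If $\varepsilon\le\varepsilon'$, then $u_\varepsilon$ is a subword of the reduced word $u_{\varepsilon'}$, so $u_\varepsilon\le u_{\varepsilon'}$ by the subword criterion. Conversely, suppose $u_\varepsilon\le u_{\varepsilon'}$. By the subword criterion, $u_\varepsilon$ is represented by a subword of the reduced word for $u_{\varepsilon'}$, and that subword is also a subword of $s_{i_1}\cdots s_{i_r}$. Uniqueness of representing subwords, as proved in the same claim, then forces $\varepsilon\le\varepsilon'$. This gives $[e,w]\cong B_r$ with $r=\ell(w)$.

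For (3)$\Rightarrow$(2), I would argue by counting atoms. The atoms of $[e,w]$ are the simple reflections $s$ with $s\le w$, which are exactly the elements of $\operatorname{supp}(w)$. The Boolean lattice $B_{\ell(w)}$ has exactly $\ell(w)$ atoms, so $|\operatorname{supp}(w)|=\ell(w)$. A reduced word of length $\ell(w)$ that uses exactly $\ell(w)$ distinct letters has no repeated letter, so $w$ is of Coxeter type.

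Nothing here seems hard. The step needing the most care is the order-reflecting direction in (2)$\Rightarrow$(3), which relies on the uniqueness statement from the proof of Lemma~\ref{lem:BS-iso-coxeter}. If one does not want to pass through that lemma's proof, one can instead argue by induction, using the decomposition $[e,w]=[e,w']\sqcup[e,w']s$ established there.
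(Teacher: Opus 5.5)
Your proof is correct and follows the paper's structure: (1)$\Leftrightarrow$(2) is quoted from Proposition~\ref{prop:toric}, and (2)$\Leftrightarrow$(3) goes through the subword map $\{0,1\}^r\to[e,w]$ with the subword criterion. The differences are local and both versions are valid. For injectivity and order-reflection you use the uniqueness claim inside the proof of Lemma~\ref{lem:BS-iso-coxeter}, where the paper uses $\operatorname{supp}(w_A)=\{s_{i_j}\mid j\in A\}$. For (3)$\Rightarrow$(2) you count atoms to get $|\operatorname{supp}(w)|=\ell(w)$, whereas the paper counts all $2^{\ell(w)}$ elements to force injectivity of the subword map.
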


\begin{proof}
The equivalence $(1)\Longleftrightarrow(2)$ is exactly Proposition~\ref{prop:toric}.
It remains to prove $(2)\Longleftrightarrow(3)$.

\medskip

\noindent
$(2)\Rightarrow(3)$.
Assume that $w$ is a Coxeter-type element, and fix a reduced expression
\[
w=s_{i_1}s_{i_2}\cdots s_{i_r},
\qquad r=\ell(w),
\]
in which the simple reflections $s_{i_1},\dots,s_{i_r}$ are pairwise distinct.
For each subset $A\subseteq [r]:=\{1,\dots,r\}$, let
$w_A:=\prod_{j\in A} s_{i_j}$, 
where the product is taken in the induced order.
Let $\mathcal{P}([r])$ denote the power set of $[r]$. 
By the subword criterion, every element of $[e,w]_W$ is equal to $w_A$ for some $A\subseteq [r]$.
Thus the map
\[
\phi:\mathcal{P}([r])\longrightarrow [e,w]_W,
\qquad
A\longmapsto w_A
\]
is surjective.
Since the letters $s_{i_1},\dots,s_{i_r}$ are pairwise distinct, the support of $w_A$ is exactly
\[
\operatorname{supp}(w_A)=\{\,s_{i_j}\mid j\in A\,\}.
\]
Hence $A\neq B$ implies $w_A\neq w_B$, so $\phi$ is injective. Therefore $\phi$ is a bijection.

Moreover, if $A\subseteq B$, then $w_A\leq w_B$ by the subword criterion, so $\phi$ is order-preserving.
Conversely, if $w_A\leq w_B$, then again by the subword criterion every reduced expression of $w_A$
appears as a subword of a reduced expression of $w_B$, and therefore
\[
\operatorname{supp}(w_A)\subseteq \operatorname{supp}(w_B).
\]
Since the letters in the chosen reduced word are distinct, this implies $A\subseteq B$.
Thus $\phi$ is a poset isomorphism:
\[
[e,w]_W \cong \mathcal{P}([r]) = B_r = B_{\ell(w)}.
\]

\medskip

\noindent
$(3)\Rightarrow(2)$.
Assume that $[e,w]_W \cong B_{\ell(w)}$, and let $w=s_{i_1}s_{i_2}\cdots s_{i_r}$ 
be any reduced expression for $w$, where $r=\ell(w)$.
For each subset $A\subseteq [r]$, let $w_A$ be the corresponding subword product. By the subword
criterion, every element of $[e,w]_W$ is equal to $w_A$ for some $A$, so the map
\[
\psi:\mathcal{P}([r])\longrightarrow [e,w]_W,
\qquad
A\longmapsto w_A
\]
is surjective. Both sets have cardinality $2^r$, hence $\psi$ is bijective.

If some simple reflection occurs twice in the reduced expression of $w$, say 
$s_{i_p}=s_{i_q}$ for $p<q$, then we get 
\[
\psi(\{p\})=s_{i_p}=s_{i_q}=\psi(\{q\}),
\]
contradicting injectivity. Therefore the simple reflections
$s_{i_1},\dots,s_{i_r}$ are pairwise distinct, so $w$ is a Coxeter-type element.
This finishes the proof of $(2)\Leftrightarrow(3)$, and hence the proposition.
\end{proof}

The previous proposition does not adapt well to the intervals in $W^P$. 	
More precisely, if $w\in W^P$ is a Coxeter-type element, then $[e,w]_{W^P}$ is a projection of the Boolean lattice $[e,w]_W$ into $W^P$ but itself need not be a Boolean lattice.
We illustrate this phenomenon by an example. 

\begin{example}
\label{ex:chainwiththreeelements}
In type $A_2$, let $P=P_{\widehat{\alpha_2}}$ be the maximal parabolic subgroup corresponding to the simple root $\alpha_2$. Let $w=s_1s_2\in W^P$.
Then $[e,w]_{W^P}$ is a chain with three elements, and $X_{wP}\cong \mathbb{P}^2$. 
\end{example}

We proceed to characterize the projected Boolean intervals in $W^P$. 
First, we recall the definition of a supersolvable lattice, a concept introduced by Stanley in~\cite{Stanley1972}.

\begin{definition}
A finite lattice $L$ is called \emph{supersolvable} if there exists a maximal chain
\[
\mathfrak m:\ \hat 0=x_0 \lessdot x_1 \lessdot \cdots \lessdot x_n=\hat 1
\]
such that, for every chain $\mathfrak c$ of $L$, the sublattice generated by
$\mathfrak m \cup \mathfrak c$ is distributive. Such a maximal chain is called an
\emph{$M$-chain}.
\end{definition}

We will also need Armstrong's results from~\cite{Armstrong2009}.

Let $E$ be a finite set equipped with a total order $\preccurlyeq$, and let
$\mathcal{F}\subseteq 2^E$. Following Armstrong~\cite{Armstrong2009}, we say that
$(E,\mathcal{F},\preccurlyeq)$ is a \emph{supersolvable antimatroid} if
$\emptyset\in\mathcal{F}$ and the following condition holds:
\begin{center}
If $A,B\in\mathcal{F}$ with $B\not\subseteq A$, and if
$x=\min_{\preccurlyeq}(B\setminus A)$, then $A\cup\{x\}\in\mathcal{F}$.
\end{center}
The elements of $\mathcal{F}$ are called \emph{feasible sets}. Armstrong proves
that any such family is an antimatroid, and that its feasible-set poset is a
supersolvable join-distributive lattice.
\medskip

Our next result shows that, although the projection of a Boolean Bruhat interval
from $W$ to $W^P$ need not remain Boolean, it still has a very rigid combinatorial
structure.

\begin{theorem}
\label{thm:ssjd-projected-boolean}
Let $w\in W^P$ be a Coxeter-type element. Then the interval $[e,w]_{W^P}$ is
isomorphic to the feasible-set lattice of a supersolvable antimatroid. In particular,
$[e,w]_{W^P}$ is a supersolvable join-distributive lattice, hence a supersolvable lattice.
\end{theorem}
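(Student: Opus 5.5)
We identify $[e,w]_{W^P}$ with a family of subsets of the letter positions of a fixed reduced word $w=s_{i_1}\cdots s_{i_r}$ with pairwise distinct letters, and then verify Armstrong's supersolvable antimatroid axiom directly. By Proposition~\ref{prop:toric-boolean}, $[e,w]_W\cong\mathcal P([r])$ via $A\mapsto w_A$. Since the letters are distinct, $w_A$ has support $\{s_{i_j}:j\in A\}$ and its reduced words are exactly the linear extensions of the ``heap'' order that $A$ inherits from $[r]$: $j<k$ are comparable when $s_{i_j},s_{i_k}$ do not commute. We therefore set $E=[r]$ with the total order $\preccurlyeq$ being the reverse of the natural order, so letters are read from the right. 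We define
\[
\mathcal F:=\{A\subseteq[r] : w_A\in W^P\}.
\]
Bruhat order on $W^P$ is the restriction of Bruhat order on $W$, and by Proposition~\ref{prop:toric-boolean} that order on the elements $w_A$ is inclusion of the sets $A$. Hence $[e,w]_{W^P}\cong(\mathcal F,\subseteq)$, and $\emptyset\in\mathcal F$.

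The first key step is a combinatorial description of $\mathcal F$. We claim that $w_A\in W^P$ if and only if every $j\in A$ with $\alpha_{i_j}\in S_P$ has a later position $k\in A$, $k>j$, such that $s_{i_k}$ does not commute with $s_{i_j}$. Equivalently, every maximal element of the heap of $A$ lies outside $S_P$. This follows because the right descents of $w_A$ are exactly the letters that can be moved to the end of some reduced word, and for distinct letters these are the heap-maximal elements. So $w_A\in W^P$ exactly when $w_A$ has no right descent in $S_P$.

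The second step checks the axiom. Let $A,B\in\mathcal F$ with $B\not\subseteq A$, and let $x=\max(B\setminus A)$ in the natural order; this is the $\preccurlyeq$-minimum. We must show $A\cup\{x\}\in\mathcal F$, and there are two things to check.
\begin{enumerate}
\item The new element $x$ is not a bad maximal element. If $\alpha_{i_x}\in S_P$, then since $B\in\mathcal F$ there is $k>x$ in $B$ with $s_{i_k}$ not commuting with $s_{i_x}$. Because $x$ is the largest element of $B\setminus A$, this $k$ lies in $A$, so $x$ is not maximal in the heap of $A\cup\{x\}$.
\item Adding $x$ does not create a new bad maximal element. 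Adding an element can only destroy maximality of other elements, never create it, so the heap-maximal elements of $A$ other than those covered by $x$ stay good.
\end{enumerate}
Thus $A\cup\{x\}\in\mathcal F$. Armstrong's theorem then gives that $\mathcal F$ is an antimatroid whose feasible-set lattice is supersolvable and join-distributive, and the isomorphism above transfers this to $[e,w]_{W^P}$.

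The main obstacle is the first step: proving rigorously that the right descent set of $w_A$ equals its set of heap-maximal letters. One direction is clear, since a heap-maximal letter commutes past all later letters to the end of a reduced word. For the converse, suppose $s=s_{i_j}$ is a right descent of $w_A$. By the exchange condition $w_As$ is a subword of the reduced word for $w_A$ with one letter deleted, and by distinctness of supports that deleted letter must be $s$ itself. This gives $w_A=w_{A\setminus\{j\}}s$. Now compare the position of $s$ in two reduced words of the same element. All reduced words of a Coxeter-type element are related by commutation moves, by Tits' theorem together with the fact that braid moves of length $\ge 3$ require repeated letters. Commutation moves preserve the heap, so $s$ must be heap-maximal. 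Checking that no step here secretly needs a braid relation is the delicate point.
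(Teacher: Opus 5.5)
Your proposal is correct and follows essentially the same route as the paper: the same family $\mathcal F=\{A\subseteq[r] : w_A\in W^P\}$, the same characterization of $\mathcal F$ via right descents and commutation-only reduced words, the same reversed total order on $[r]$ with $x=\max(B\setminus A)$, and the same appeal to Armstrong's results. Your exchange-condition argument for the descent characterization is a more detailed version of the paper's one-line commutation argument. The detour is not needed: appending $s$ to a reduced word of $w_As$ already gives a reduced word of $w_A$ ending in $s$.
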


\begin{proof}
Let $w=s_{i_1}s_{i_2}\cdots s_{i_r}$  ($r=\ell(w)$) be a reduced expression for $w$.
For each subset $A\subseteq [r]:=\{1,\dots,r\}$, let
$v_A:=\prod_{j\in A} s_{i_j}$, 
that is, the subword product taken in \emph{increasing order of the indices}.
Since $w$ is Coxeter-type, Proposition~\ref{prop:toric-boolean} identifies the Boolean
lattice $2^{[r]}$ with the Bruhat interval $[e,w]_W$ via
\[
A\longmapsto v_A.
\]
Let
\[
\mathcal{F}:=\{A\subseteq [r]\mid v_A\in W^P\}.
\]
Then the above map restricts to an order-isomorphism
\[
(\mathcal{F},\subseteq)\ \xrightarrow{\ \sim\ }\ [e,w]_{W^P}.
\]
Thus it suffices to show that $\mathcal{F}$ is the family of feasible sets of a
supersolvable antimatroid.

\medskip

First, we claim that
\[
A\in\mathcal{F}
\]
if and only if the following condition holds:
\begin{center}
{for every $j\in A$ such that $\alpha_{i_j}\in S_P$, there exists
$k\in A$ with $k>j$ and $\beta_{i_j,i_k}\neq 0$.}\qquad ($\ast$)
\end{center}
Recall that $v_A\in W^P$ if and only if $v_A$ has no right descent in the set
$\{s_\alpha\mid \alpha\in S_P\}$. Since the simple reflections appearing in the
fixed word $s_{i_1}\cdots s_{i_r}$ are pairwise distinct, every reduced expression
for $v_A$ is obtained from the fixed subword defining $v_A$ by commuting adjacent
commuting generators only: braid moves of length $\ge 3$ cannot occur because they
would require repeated simple reflections. Therefore, for a fixed $j\in A$, the
simple reflection $s_{i_j}$ is a right descent of $v_A$ if and only if it can be
moved to the far right by commuting moves, which is equivalent to saying that it
commutes with every selected letter to its right. Since
\[
s_{i_j}s_{i_k}=s_{i_k}s_{i_j}
\quad\Longleftrightarrow\quad
\beta_{i_j,i_k}=0
\]
for distinct simple reflections, this happens if and only if there is \emph{no}
$k\in A$ with $k>j$ and $\beta_{i_j,i_k}\neq 0$. Hence $v_A\in W^P$ if and only if
condition $(\ast)$ holds, proving the claim.

\medskip

We now discuss the supersolvable antimatroid axiom in our context. 
Let us put the reverse order on the ground set $[r]$:
\[
r\prec r-1\prec \cdots \prec 1.
\]
We will show that $([r],\mathcal{F},\prec)$ is a supersolvable antimatroid.

Let $A,B\in\mathcal{F}$ with $B\not\subseteq A$, and let
$x:=\min_{\prec}(B\setminus A)$. 
By definition of $\prec$, this means that $x$ is the \emph{largest} element of
$B\setminus A$ in the usual order on integers. We must prove that
\[
A\cup\{x\}\in\mathcal{F}.
\]
By our claim from the previous paragraph, it is enough to verify condition $(\ast)$ for $A\cup\{x\}$.

To this end, take $j\in A\cup\{x\}$ with $\alpha_{i_j}\in S_P$.
If $j\in A$, then $A\in\mathcal{F}$, so by Step~1 there exists $k\in A$ with
$k>j$ and $\beta_{i_j,i_k}\neq 0$. Since $A\subseteq A\cup\{x\}$, the same $k$
shows that condition $(\ast)$ still holds for $j$ in $A\cup\{x\}$.
Thus, it remains to consider the case $j=x$. If $\alpha_{i_x}\notin S_P$, there is
nothing to prove. So assume $\alpha_{i_x}\in S_P$. Since $x\in B$ and
$B\in\mathcal{F}$, Step~1 implies that there exists $k\in B$ such that
\[
k>x
\qquad\text{and}\qquad
\beta_{i_x,i_k}\neq 0.
\]
But $x$ is the largest element of $B\setminus A$ in the usual order, so $k\notin B\setminus A$.
Hence $k\in A$. Therefore $k\in A\cup\{x\}$, and again condition $(\ast)$ holds
for $j=x$.
Thus $A\cup\{x\}\in\mathcal{F}$. Since also $\emptyset\in\mathcal{F}$, this proves
that $([r],\mathcal{F},\prec)$ is a supersolvable antimatroid.

Now, since every supersolvable antimatroid is, in particular, an antimatroid, 
we see from the work~\cite{Edelman1980} of Edelman that the feasible-set lattice $(\mathcal{F},\subseteq)$ is
join-distributive (see \cite[Theorem~2.8]{Armstrong2009}).
Moreover, by \cite[Theorem~2.13]{Armstrong2009}, the feasible-set lattice of a
supersolvable antimatroid is supersolvable. 
In other words, $(\mathcal{F},\subseteq)\ \cong\ [e,w]_{W^P}$ is a supersolvable join-distributive lattice. 
This finishes the proof of our theorem.
\end{proof}

\begin{remark}
Example~\ref{ex:chainwiththreeelements} shows that the projected interval need not remain Boolean. 
Of course, that example is a chain, hence still distributive. Thus Theorem~\ref{thm:ssjd-projected-boolean}
should be viewed as a genuine extension of the Boolean case.
Distributivity can nevertheless fail in general. 
For instance, in type $A_3$, let
$P=P_{\alpha_2}$ be the minimal parabolic subgroup corresponding to $\alpha_2$. Let
\[
w=s_2s_1s_3\in W^P.
\]
Then $[e,w]_{W^P}$ is identified with the family
\[
\mathcal F=
\{\emptyset,\{2\},\{3\},\{2,3\},\{1,2\},\{1,3\},\{1,2,3\}\},
\]
ordered by inclusion. 
This lattice is not distributive. 
Indeed, if
\[
x=\{1,2\},\qquad y=\{2\},\qquad z=\{1,3\},
\]
then
\[
x\wedge (y\vee z)=x,
\qquad\text{whereas}\qquad
(x\wedge y)\vee (x\wedge z)=\{2\}.
\]
Hence the conclusion of Theorem~\ref{thm:ssjd-projected-boolean} is genuinely
weaker than distributivity, but much stronger than mere latticehood.
\end{remark}

\section{Smooth Toric Schubert varieties in \texorpdfstring{$G/P$}{G/P}}
\label{sec:smooth}

In this section we characterize smooth toric Schubert varieties in $G/P$.

\begin{lemma}
\label{lem:subsmooth}
Let $X_\Sigma$ be a smooth toric variety. Then every $T$-orbit closure $V(\sigma)$ is smooth.
\end{lemma}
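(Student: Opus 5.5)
The plan is to reduce to the affine, local picture using the orbit--cone correspondence \cite[Chapter~3]{CLS}. Let $N$ be the cocharacter lattice and $\sigma\in\Sigma$ a cone. The orbit closure $V(\sigma)$ is itself a toric variety. Its torus is $T_{N(\sigma)}$, where $N(\sigma)=N/N_\sigma$ and $N_\sigma$ is the sublattice spanned by $\sigma\cap N$. Its fan is the star
\[
\operatorname{Star}(\sigma)=\{\overline{\tau}\mid \tau\in\Sigma,\ \sigma\preceq\tau\},
\]
where $\overline{\tau}$ is the image of $\tau$ in $N(\sigma)_\R=N_\R/\operatorname{span}(\sigma)$. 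A toric variety is smooth exactly when every cone of its fan is generated by part of a $\Z$-basis of the lattice. So it suffices to show that each $\overline{\tau}$ with $\sigma\preceq\tau$ is smooth with respect to $N(\sigma)$.

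The steps are as follows.

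\begin{enumerate}
\item Fix $\tau\succeq\sigma$. Since $X_\Sigma$ is smooth, $\tau$ is smooth, so its primitive ray generators $u_1,\dots,u_k$ extend to a $\Z$-basis $u_1,\dots,u_n$ of $N$.
\item Because $\sigma$ is a face of $\tau$, it is generated by a subset of these rays, say $u_1,\dots,u_s$.
\item Then $N_\sigma=\Z u_1\oplus\cdots\oplus\Z u_s$. This is saturated in $N$, since it is spanned by part of a basis.
\item Hence $N(\sigma)$ is free with basis the images $\overline{u}_{s+1},\dots,\overline{u}_n$.
\item The cone $\overline{\tau}$ is generated by $\overline{u}_{s+1},\dots,\overline{u}_k$. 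This is part of a $\Z$-basis of $N(\sigma)$, so $\overline{\tau}$ is smooth.
\end{enumerate}

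The fan of $V(\sigma)$ is $\operatorname{Star}(\sigma)$, and its cones are exactly these $\overline{\tau}$. Therefore every cone of that fan is smooth, and $V(\sigma)$ is smooth.

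The only point needing care is the identification of $V(\sigma)$ with the toric variety of $\operatorname{Star}(\sigma)$ in the lattice $N(\sigma)$. I would quote this from \cite[Proposition~3.2.7]{CLS} rather than reprove it.

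An alternative, more geometric route avoids fans. Cover $X_\Sigma$ by the affine charts $U_\tau\cong k^n$, one for each smooth maximal cone $\tau$. In such a chart, the orbit closures meeting it are coordinate subspaces $\{x_{i}=0,\ i\in I\}$, which are smooth. Either argument suffices; I would present the lattice argument.
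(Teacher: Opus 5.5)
Your argument is correct, but your main route is not the paper's; the paper's proof is essentially the ``alternative, more geometric route'' you sketch at the end.

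The paper's proof runs as follows:
\begin{itemize}
\item It takes a maximal cone $\tau\supseteq\sigma$ and uses smoothness to write $U_\tau\cong\mathbb{A}^r$. The coordinates $x_i$ are the characters of the basis dual to the ray generators.
\item It observes that $V(\sigma)\cap U_\tau=V(x_1,\dots,x_k)\cong\mathbb{A}^{r-k}$.
\item It concludes because the charts $U_\tau$ with $\tau\supseteq\sigma$ cover $V(\sigma)$.
\end{itemize}

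You instead stay on the fan side. You identify $V(\sigma)$ with the toric variety of $\operatorname{Star}(\sigma)$ in $N(\sigma)$ via \cite[Proposition~3.2.7]{CLS}. You then check that each $\overline{\tau}$ is generated by part of a $\Z$-basis, because $N_\sigma$ is spanned by part of a basis of $N$.

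The chart argument is more elementary and gives explicit local equations. As written, though, it has two loose points:
\begin{itemize}
\item It tacitly assumes the maximal cones through $\sigma$ are full-dimensional. This is harmless here, since Schubert varieties are complete.
\item Its phrasing ``extend $v_1,\dots,v_k$ to a basis, and let $\tau$ be a maximal cone'' should be read in your order: choose $\tau\in\Sigma$ first, then note that the rays of $\sigma$ are among those of $\tau$.
\end{itemize}

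Your version sidesteps both points. It also directly produces the form in which the lemma is used in Lemma~\ref{lem:tail-cones}, namely smoothness of image cones in a quotient lattice such as $N_k$. The cost is that you rely on the full star-fan description of orbit closures, rather than only the local coordinate-subspace picture.
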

\begin{proof}
The $T$-orbit closures of $X_\Sigma$ are in one-to-one correspondence with cones 
$\sigma\in\Sigma$; the orbit closure corresponding to $\sigma$ is
$V(\sigma):=\overline{O(\sigma)}$.
	Let $\sigma=\langle v_1,\dots,v_k\rangle\in\Sigma$. Since the fan $\Sigma$ is smooth, the primitive generators of any cone can be extended to a $\mathbb{Z}$-basis of the lattice $N$. Extend $v_1,\dots,v_k$ to a basis $v_1,\dots, v_k, v_{k+1},\ldots, v_r$ of $N$
	and let $\tau=\langle v_1,\dots,v_r\rangle$ be a maximal cone containing $\sigma$. 
	
	Since $v_1,\dots,v_r$ is a lattice basis of $N$, let $u_1,\dots,u_r$ be the dual basis of $M$. Then the coordinate functions are $x_i=\chi^{u_i}.$
	The affine toric chart corresponding to $\tau$ satisfies $U_\tau \cong \mathbb{A}^r$ with coordinates $x_1,\dots,x_r$.
	In this chart the orbit closure $V(\sigma)$ is defined by
	\[
	V(\sigma)\cap U_\tau = V(x_1,\dots,x_k)\cong \mathbb{A}^{\,r-k}.
	\]
	Hence $V(\sigma)\cap U_\tau$ is smooth. The open sets $U_\tau$ with $\tau\supset\sigma$ cover $V(\sigma)$, so $V(\sigma)$ is smooth.
\end{proof}

\begin{proposition}
\label{prop:smoothat1P}
Let $w\in W^P$ be a Coxeter-type element with reduced expression
\[
w=s_{i_1}s_{i_2}\cdots s_{i_r}.
\]
If $\alpha_{i_j}\notin S_P$ for all $1\le j\le r$, then the toric Schubert variety $X_{wP}$ is smooth.
\end{proposition}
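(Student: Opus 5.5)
If no letter of the reduced word lies in $S_P$, then no nontrivial element of $[e,w]_W$ can be merged with another under $\pi_P$. The plan is to show that $\pi_P|_{X_{wB}}:X_{wB}\to X_{wP}$ is then an isomorphism, and to conclude smoothness from Lemma~\ref{lem:BS-iso-coxeter}, since $X_{wB}\cong Z(\mathbf{w})$ is smooth.

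\emph{Step 1 (combinatorics).} For $A\subseteq[r]$, every simple reflection in $\operatorname{supp}(v_A)$ lies outside $S_P$. So condition $(\ast)$ in the proof of Theorem~\ref{thm:ssjd-projected-boolean} is vacuous, and $\mathcal F=2^{[r]}$; that is, every $u\leqslant w$ already lies in $W^P$. Two elements $u,u'\in[e,w]_W$ with $uW_P=u'W_P$ are both minimal coset representatives, so $u=u'$. Hence $\pi_P$ induces a bijection $(X_{wB})^T\to(X_{wP})^T$, and $[e,w]_{W^P}\cong B_r$.

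\emph{Step 2 (fans).} By Proposition~\ref{cone}, each maximal cone $C^P_v$ of $\Sigma^P_w$ is the union of the cones $C^B_u$ with $uW_P=vW_P$. By Step 1 that union has exactly one member, so $C^P_v=C^B_v$. The lattices were identified in the proof of Proposition~\ref{cone}, so $\Sigma^P_w=\Sigma^B_w$ as fans in the same lattice. The induced toric morphism $X_{wB}\to X_{wP}$ therefore comes from the identity map of fans, and it is an isomorphism of toric varieties.

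\emph{Step 3 (conclusion).} By Lemma~\ref{lem:BS-iso-coxeter}, $X_{wB}\cong Z(\mathbf{w})$, which is an iterated $\mathbb P^1$-bundle and hence smooth. Equivalently, Theorem~\ref{generatingmatrix} shows that each cone $C^B_v$ is generated by $\{e_k^+\}_{k\notin J}\cup\{e_k^-\}_{k\in J}$. This set is unimodular, since the matrix it forms is triangular with diagonal entries $\pm1$ in the basis $e^+_1,\dots,e^+_r$. So $X_{wP}\cong X_{wB}$ is smooth.

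The main obstacle is Step 2. Proposition~\ref{cone} only describes supports of cones, so one must check that equal maximal cones, together with the identification of lattices, genuinely give an isomorphism of varieties. As a backup argument that avoids fans: $\pi_P|_{X_{wB}}$ is projective and birational by \eqref{eq:celliso}, and $X_{wP}$ is normal. Its fibres are $T$-stable and closed. If a fibre had two points, it would contain two $T$-fixed points, contradicting the bijectivity from Step 1, exactly as in the proof of Lemma~\ref{lem:BS-iso-coxeter}. So the map is bijective, hence finite, and Zariski's Main Theorem gives the isomorphism.
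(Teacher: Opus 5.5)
Your argument is correct, but it takes a different route from the paper. The paper does not compare $X_{wB}$ and $X_{wP}$ globally. It only observes that $[e,w]_W\cap W_P=\{e\}$, so by Proposition~\ref{cone} the cone at $\tau_1$ is $C^P_1=C^B_e=\mathrm{Cone}(e_1^+,\dots,e_r^+)$. This cone is unimodular, which gives smoothness at $\tau_1$. The paper then concludes because $\mathrm{Sing}(X_{wP})$ is closed and $B$-stable, and every nonempty closed $B$-stable subset of $X_{wP}$ contains the closed $B$-orbit $\tau_1$. You instead show that no merging of cosets happens anywhere ($[e,w]_W\subseteq W^P$). You upgrade this to an isomorphism $X_{wB}\xrightarrow{\sim}X_{wP}$ and import smoothness from $X_{wB}\cong Z(\mathbf w)$.

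The paper's route is a single local computation plus the $B$-stability trick. It is also the template reused in Lemma~\ref{lemma:one-variable} and Theorem~\ref{thm:direct-smoothness}, where $X_{wB}\to X_{wP}$ is no longer an isomorphism and only $C^P_1$ is accessible. Your route yields the stronger statement $X_{wP}\cong X_{wB}$ (and $[e,w]_{W^P}\cong B_r$). Your Zariski Main Theorem version also bypasses Proposition~\ref{cone} entirely, at the cost of the global input Lemma~\ref{lem:BS-iso-coxeter}.

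Your worry about Step 2 is unnecessary: a cone is determined by its support, so with the lattices identified, Proposition~\ref{cone} gives $\Sigma^P_w=\Sigma^B_w$ outright. In the backup argument, two points need care. Only the fibres over $T$-fixed points are $T$-stable; the other fibres are $B$-translates of them, as in Lemma~\ref{lem:BS-iso-coxeter}. Also, the step ``more than one point $\Rightarrow$ two fixed points'' uses that a nontrivial $T$-orbit closure in a projective variety has at least two $T$-fixed points.
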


\begin{proof}
	Since $\alpha_{i_{j}}\notin S_{P}$ for all $1\le j\le r$, the maximal cone $C_{1}^{P}$ corresponding to the identity element $1$ has the extremal ray generators $\{e_1^+, e_2^+, \ldots, e_r^+\}$. 
	Since $e_1^+, e_2^+, \ldots, e_r^+$ spans $\mathbb{Z}^r$, it follows that $C_{1}^{P}$ is a smooth cone. 
	Hence, $X_{wP}$ is smooth at the $T$-fixed point $\tau_{1}:=1P$.

Now the singular locus of $X_{wP}$, denoted $\mathrm{Sing}(X_{wP})$, is a closed $B$-stable subset of $X_{wP}$, since the natural action of $B$ on $X_{wP}$ preserves the singular locus. 
Assume towards a contradiction that $X_{wP}$ is singular. Then $\mathrm{Sing}(X_{wP})$ is a non-empty closed $B$-stable subset of the Schubert variety $X_{wP}$. But $\tau_{1}$ is the unique closed $B$-orbit in $X_{wP}$.
Equivalently, every non-empty closed $B$-stable subset of $X_{wP}$ contains $\tau_{1}$. 
Hence, under our assumption, we have $\tau_{1} \in \mathrm{Sing}(X_{wP})$, contradicting the fact that $X_{wP}$ is smooth at $\tau_{1}$. 
Therefore $\mathrm{Sing}(X_{wP})=\emptyset$, and thus $X_{wP}$ is smooth.
\end{proof}

\begin{remark}
The converse of the previous proposition is false. For example, let
$G=SL_3$, let $P=P_{\widehat{\alpha_2}}$ be the maximal parabolic corresponding to the simple root $\alpha_2$. Let $w=s_1s_2\in W^P$.
Then $\alpha_1\in S_P$, so the hypothesis of the proposition does not hold.
Nevertheless,
\[
X_{wP}=G/P\cong \mathbb{P}^2,
\]
hence $X_{wP}$ is smooth and toric.
Thus the condition $\alpha_{i_j}\notin S_P$ for all $j$ is sufficient but not necessary for smoothness.
\end{remark}

We now have a technical lemma towards the proof of our main result.

\begin{lemma}
\label{lemma:one-variable}
Let $w=s_{i_1}s_{i_2}\cdots s_{i_r}\in W^P$ be a Coxeter-type element, and assume that
\[
\alpha_{i_1}\in S_P
\qquad\text{and}\qquad
\alpha_{i_j}\notin S_P\ \text{ for all } j\ge 2.
\]
Then the toric Schubert variety $X_{wP}$ is smooth if and only if there exists a unique index
\[
2\le l\le r
\]
such that
\[
\beta_{i_1,i_l}=-1,
\qquad
\beta_{i_1,i_j}=0\quad\text{for all } j>1,\ j\ne l.
\]
Equivalently,
\[
e_1^+ + e_1^- = e_l^+,
\]
and the primitive generators of the cone $C_1^P$ are

\[
\{e_i^+\mid i\neq l\}\cup\{e_{1}^-\}.
\]
\end{lemma}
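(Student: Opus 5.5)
The plan is to reduce smoothness of $X_{wP}$ to smoothness of the single maximal cone $C_1^P$, and then compute that cone explicitly from Theorem~\ref{generatingmatrix} and Proposition~\ref{cone}.

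The reduction repeats the argument of Proposition~\ref{prop:smoothat1P}. The singular locus of $X_{wP}$ is closed and $B$-stable, and $\tau_1$ is the unique closed $B$-orbit. Hence $X_{wP}$ is smooth if and only if it is smooth at $\tau_1$, that is, if and only if $C_1^P$ is a smooth cone.

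Next we identify $C_1^P$. Since $w\in W^P$ and $w=s_{i_1}\cdots s_{i_r}$ is Coxeter-type with only $\alpha_{i_1}\in S_P$, we use the subset description of Theorem~\ref{thm:ssjd-projected-boolean}: the subsets $A$ with $v_A\in W_P$ are those with $v_A$ built from letters in $S_P$. Thus the only such subsets are $\emptyset$ and $\{1\}$. By Proposition~\ref{cone} this gives
\[
C_1^P=C_1^B\cup C_{s_{i_1}}^B=\mathrm{Cone}(e_1^+,\dots,e_r^+)\cup\mathrm{Cone}(e_1^-,e_2^+,\dots,e_r^+).
\]
Here $e_1^-=-e_1^+-\sum_{m>1}\beta_{i_1,i_m}e_m^+$, where each $\beta_{i_1,i_m}\le 0$ is a Cartan integer. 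Set $b_m:=-\beta_{i_1,i_m}\ge 0$, so that $e_1^++e_1^-=\sum_{m>1}b_m e_m^+$.

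Because the union is a convex cone of dimension $r$, it equals $\mathrm{Cone}(e_1^+,e_1^-,e_2^+,\dots,e_r^+)$. The main step is to find the extremal rays of this cone:
\begin{itemize}
\item Since $e_1^+$ and $e_1^-$ lie on opposite sides of the hyperplane $x_1=0$, both are extremal.
\item A ray $e_m^+$ with $m\ge2$ fails to be extremal exactly when it lies in the cone generated by the others. Comparing the $x_1$-coordinates shows this happens only when $e_m^+$ is a positive combination of $e_1^++e_1^-$, which forces $e_1^++e_1^-$ to be a multiple of $e_m^+$.
\end{itemize}
So the ray generators are all $r+2$ vectors unless the vector $(b_m)_{m>1}$ is supported on a single index $l$. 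The cone is strongly convex with $r$-dimensional span, so smoothness requires exactly $r$ primitive generators forming a $\mathbb{Z}$-basis. We conclude:
\begin{itemize}
\item If at least two of the $b_m$ are nonzero, there are $r+2>r$ rays, and the cone is not simplicial.
\item If all $b_m$ vanish, then $e_1^-=-e_1^+$ and the cone contains a line. This is excluded: since $w\in W^P$, condition $(\ast)$ of Theorem~\ref{thm:ssjd-projected-boolean} applied to $A=[r]$ gives some $k>1$ with $\beta_{i_1,i_k}\ne0$.
\item If exactly one $b_l\neq 0$, the generators are $\{e_i^+\mid i\ne l\}\cup\{e_1^-\}$. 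Their determinant with respect to the standard basis is $\pm b_l$. Hence the cone is smooth if and only if $b_l=1$, i.e.\ $\beta_{i_1,i_l}=-1$, which is the relation $e_1^++e_1^-=e_l^+$.
\end{itemize}

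The main obstacle is justifying that $C_1^P$ is exactly this convex union. In particular, one must check that merging the two full-flag cones in Proposition~\ref{cone} yields a convex cone whose rays are as claimed. To do this, I would verify that $C_1^B$ and $C_{s_{i_1}}^B$ share the facet $\mathrm{Cone}(e_2^+,\dots,e_r^+)$ and that their union is convex. Convexity holds precisely because $e_1^++e_1^-$ lies in the span of that facet on the positive side. Carrying out the extremal-ray computation carefully in that situation then completes the argument.
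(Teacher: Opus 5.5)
Your proposal is correct and follows essentially the paper's route: you reduce to smoothness of $C_1^P$ via the $B$-stable singular locus, identify $C_1^P=\mathrm{Cone}(e_1^+,\dots,e_r^+,e_1^-)$ from Proposition~\ref{cone}, and analyze its extremal rays. Your case split on the support of $(b_m)$ with the determinant $\pm b_l$ replaces the paper's coefficient comparison for the redundant generator $e_l^+$, and your explicit convexity check and use of $(\ast)$ to exclude $e_1^-=-e_1^+$ supply points the paper leaves implicit; one small slip is that in the non-simplicial case there are $r+1$ rays, not $r+2$, which does not affect the conclusion.
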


\begin{proof}
Since $w$ is of Coxeter type, Proposition~\ref{prop:toric} shows that $X_{wP}$ is toric.
As explained in the proof of Proposition~\ref{prop:smoothat1P}, $X_{wP}$ is smooth if and only if the cone $C_1^P$ is smooth.

Since $\alpha_{i_1}\in S_P$ and $\alpha_{i_j}\notin S_P$ for $j\ge 2$, Proposition~\ref{cone}
gives
\[
C_1^P = \mathrm{Cone}\bigl(e_1^+,\dots,e_r^+,e_1^-\bigr).
\]
By Theorem~\ref{generatingmatrix},
\[
e_1^- = -e_1^+ - \sum_{m=2}^r \beta_{i_1,i_m} e_m^+,
\]
so
\[
e_1^+ + e_1^- = -\sum_{m=2}^r \beta_{i_1,i_m} e_m^+.
\]
Since $w$ is of Coxeter type, the simple roots $\alpha_{i_1},\dots,\alpha_{i_r}$ are pairwise
distinct. Therefore
\[
\beta_{i_1,i_m}\le 0\qquad\text{for all } m\ge 2.
\]

Assume first that $X_{wP}$ is smooth. Then $C_1^P$ is a smooth cone of dimension $r$, so its
primitive ray generators form a $\Z$-basis of the lattice. The vector $e_1^-$ spans an
extremal ray, because it is the only generator whose coefficient along $e_1^+$ is negative.
Likewise, $e_1^+$ also spans an extremal ray: indeed, if
\[
e_1^+ = a\,e_1^- + \sum_{m=2}^r a_m e_m^+
\qquad\text{with } a,a_m\ge 0,
\]
then comparing the coefficient of $e_1^+$ gives $1=-a$, which is impossible. Hence both
$e_1^+$ and $e_1^-$ must belong to the primitive ray basis of $C_1^P$.

Since $C_1^P$ has $r+1$ generators in a lattice of rank $r$, exactly one of the positive generators
$e_2^+,\dots,e_r^+$ must be redundant. Let that redundant generator be $e_l^+$ with $2\le l\le r$.
Then
\[
e_l^+ = a\,e_1^- + b\,e_1^+ + \sum_{\substack{m=2\\ m\ne l}}^r a_m e_m^+
\qquad\text{for some } a,b,a_m\in\Z_{\ge 0}.
\]
Substituting the expression for $e_1^-$, we obtain
\[
e_l^+
=
(b-a)e_1^+
+\sum_{\substack{m=2\\ m\ne l}}^r \bigl(a_m-a\beta_{i_1,i_m}\bigr)e_m^+
-a\beta_{i_1,i_l}e_l^+.
\]
Comparing coefficients in the basis $e_1^+,\dots,e_r^+$ gives
\[
b=a,
\qquad
1=-a\beta_{i_1,i_l},
\qquad
0=a_m-a\beta_{i_1,i_m}\quad\text{for } m\ne l,\ m\ge 2.
\]
Since $a\ge 0$ and $\beta_{i_1,i_m}\le 0$, the last relation forces
\[
a_m = a\beta_{i_1,i_m}\le 0.
\]
But $a_m\ge 0$, hence $a_m=0$ and $\beta_{i_1,i_m}=0$ for all $m\ne l$, $m\ge 2$.
Then $1=-a\beta_{i_1,i_l}$ implies $a=1$ and $\beta_{i_1,i_l}=-1$.
Thus there is a unique index $l$ such that
\[
\beta_{i_1,i_l}=-1,
\qquad
\beta_{i_1,i_j}=0\quad\text{for all } j>1,\ j\ne l.
\]

Conversely, assume that there exists a unique index $l$ with
\[
\beta_{i_1,i_l}=-1,
\qquad
\beta_{i_1,i_j}=0\quad\text{for all } j>1,\ j\ne l.
\]
Then
\[
e_1^+ + e_1^- = e_l^+,
\]
so $e_l^+$ is redundant. Hence the primitive ray generators of $C_1^P$ are exactly
\[
\{e_1^-,\,e_1^+,\,e_2^+,\dots,\widehat{e_l^+},\dots,e_r^+\}.
\]
Moreover,
\[
e_1^- = -e_1^+ + e_l^+,
\]
so the above set is obtained from the standard basis
\[
\{e_1^+,e_2^+,\dots,e_r^+\}
\]
by replacing $e_l^+$ with $e_1^-$. The corresponding change-of-basis matrix is triangular with
diagonal entries $\pm 1$, hence has determinant $\pm 1$. Therefore the primitive ray generators
form a $\Z$-basis, so $C_1^P$ is smooth. Consequently $X_{wP}$ is smooth.
\end{proof}

\begin{lemma} 
\label{lem:tail-cones}
Let $w=s_{i_1}\cdots s_{i_r}\in W^P$ be a Coxeter-type element, and let
\[
J_P(\mathbf w):=\{\,j\in\{1,\dots,r\}\mid \alpha_{i_j}\in S_P\,\}
=\{j_1<\cdots<j_p\}.
\]
where $p\le r$. 
For $1\le k\le p$, set
\[
w^{(k)}:=s_{i_{j_k}}s_{i_{j_k+1}}\cdots s_{i_r}\in W^P.
\]
Let
\[ N:= \Z e_{1}^+\oplus \Z e_2^+\oplus \cdots \oplus \Z e_r^+, \qquad 
N_k:=\frac{\Z\langle e_1^+, \ldots, e_r^+ \rangle}{\Z\langle e_1^+, \ldots, e_{j_k-1}^+ \rangle}\cong \Z \overline{e_{j_k}^+}\oplus \Z \overline{e_{j_k+1}^+}\oplus \cdots \oplus \Z \overline{e_r^+}
\]
and denote by $\overline{x}$ the image of a vector $x\in N$ in $N_k$.
Then the maximal cone at $\tau_{1}$ of the toric Schubert variety $X_{w^{(k)}P}$ is
$\overline{C}^{(k)}_1 = \mathrm{Cone}\bigl(\overline{e_{j_k}^+},\overline{e_{j_k+1}^+},\dots,\overline{e_r^+},
\overline{d_k},\overline{d_{k+1}},\dots,\overline{d_p}\bigr)$, 
where $d_\ell:=e_{j_\ell}^-$ for $\ell=k,\dots,p$.

In particular, if $X_{wP}$ is smooth, then every $X_{w^{(k)}P}$ is smooth by
Lemma~\ref{lem:subsmooth}, hence every cone $\overline{C}^{(k)}_1$ is smooth.
\end{lemma}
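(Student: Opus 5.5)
The plan has two parts. First, compute the cone $\overline{C}^{(k)}_1$ of the toric variety $X_{w^{(k)}P}$ directly from Theorem~\ref{generatingmatrix} and Proposition~\ref{cone}. Second, realize $X_{w^{(k)}P}$ as a $T$-orbit closure in $X_{wP}$, so that the smoothness claim follows from Lemma~\ref{lem:subsmooth}.

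\textbf{Step 1: $w^{(k)}\in W^P$ and its fan.} The word $s_{i_{j_k}}\cdots s_{i_r}$ is a reduced Coxeter-type word. Membership in $W^P$ follows from condition $(\ast)$ in the proof of Theorem~\ref{thm:ssjd-projected-boolean}. Indeed, $\{j_k,\dots,r\}$ is an upper set of $[r]$, and $[r]$ satisfies $(\ast)$ because $w\in W^P$; the witnesses $k'>j$ for $j\ge j_k$ remain in the set, so $(\ast)$ still holds. Re-index the letters by positions $j_k,\dots,r$ and apply Theorem~\ref{generatingmatrix} to $w^{(k)}$. Its lattice has basis $\overline{e_{j_k}^+},\dots,\overline{e_r^+}$, and the negative generators are
\[
-\overline{e_m^+}-\sum_{m'>m}\beta_{i_m,i_{m'}}\overline{e_{m'}^+}.
\]
These agree with the images of $e_m^-$ in $N_k$, since the formula for $e_m^-$ only involves indices $\ge m$. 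By Proposition~\ref{cone}, $\overline{C}^{(k)}_1$ is the union of the cones $C_u^B$ over $u\in W_P$ with $u\le w^{(k)}$. By Proposition~\ref{prop:toric-boolean}, these $u$ are exactly the subwords $v_A$ with $A\subseteq\{j_k,\dots,j_p\}$.

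\textbf{Step 2: the union of these cones is a single cone.} The $2^{p-k+1}$ cones $\mathrm{Cone}(\{\overline{e_m^+}\}_{m\notin A}\cup\{\overline{e_m^-}\}_{m\in A})$ must fill $\mathrm{Cone}(\overline{e^+_{j_k}},\dots,\overline{e^+_r},\overline{d_k},\dots,\overline{d_p})$. Containment of the union in this cone is clear. The reverse containment follows because the union is the support of a convex cone, namely the maximal cone of the coarsened fan. Each generator lies in some $C_u^B$ of the union, so the convex hull of the union equals the union and hence equals the stated cone.

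\textbf{Step 3: orbit closure.} I expect this to be the main obstacle. One must check that $X_{w^{(k)}P}$ is a $T$-orbit closure of $X_{wP}$ whose cone corresponds to the face structure above. The approach is to use the Bott--Samelson isomorphism of Lemma~\ref{lem:BS-iso-coxeter}. Inside $Z(\mathbf w)$, the locus where the first $j_k-1$ factors are trivial is the $T$-stable divisor intersection $V(\mathrm{Cone}(e_1^+,\dots,e_{j_k-1}^+))$ in $X_{wB}$. That intersection is isomorphic to $Z(\mathbf{w}^{(k)})\cong X_{w^{(k)}B}$, and its lattice is the quotient $N_k$. Projecting by $\pi_P$ should identify $X_{w^{(k)}P}$ with the $T$-orbit closure in $X_{wP}$ attached to the cone $\mathrm{Cone}(e_1^+,\dots,e_{j_k-1}^+)$. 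This cone is a face of $C_1^P$ when the $e_m^+$, $m<j_k$, are rays of $C_1^P$; otherwise one takes the smallest cone of $\Sigma_w^P$ containing them. In either case $X_{w^{(k)}P}=\overline{B w^{(k)}P}/P$ is an irreducible $T$-stable closed subvariety of the toric variety $X_{wP}$. Every irreducible $T$-stable closed subvariety of a toric variety is an orbit closure $V(\sigma)$. Hence Lemma~\ref{lem:subsmooth} applies directly: if $X_{wP}$ is smooth, then so is $X_{w^{(k)}P}$. By the argument in Proposition~\ref{prop:smoothat1P}, the cone at $\tau_1$, namely $\overline{C}^{(k)}_1$, is then smooth.
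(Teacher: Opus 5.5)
Your argument is correct, but for the cone formula it takes a different route from the paper.

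\textbf{The paper's route.} It works extrinsically. It realizes $X_{w^{(k)}P}$ as a $T$-orbit closure inside the chart $U_{C_1^P}$ of $X_{wP}$. It then obtains $\overline{C}^{(k)}_1$ as the image of $C_1^P$ under the quotient $N\to N_k$; the images of $d_\ell$ for $\ell<k$ become nonnegative combinations of the $\overline{e_m^+}$ and drop out.

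\textbf{Your route.} You work intrinsically. You apply Theorem~\ref{generatingmatrix} and Proposition~\ref{cone} directly to the suffix word of $w^{(k)}$, and match its lattice with $N_k$ using the fact that $e_m^-$ for $m\ge j_k$ involves only indices $\ge m$.

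\textbf{What your route buys.}
\begin{itemize}
\item You actually check that $w^{(k)}\in W^P$, via $(\ast)$; the paper only asserts it.
\item Your convexity argument justifies that the merged cone equals the explicitly generated cone, a step the paper uses silently even for $C_1^P$.
\item You never need to determine which cone of $\Sigma_w^P$ cuts out $X_{w^{(k)}P}$. The paper identifies that cone with $\mathrm{Cone}(e_1^+,\dots,e_{j_k-1}^+)$, which is correct only up to linear span. By a dimension count, the span is all the quotient construction requires.
\end{itemize}

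\textbf{What the paper's route buys.} Naturality: its identification of the lattice of $X_{w^{(k)}P}$ with $N_k$ is the one induced by the embedding $X_{w^{(k)}P}\subset X_{wP}$. Yours is obtained by matching bases with the lattice of Theorem~\ref{generatingmatrix}. Since the proof of Theorem~\ref{thm:direct-smoothness} only manipulates the explicit generators in $N_k$, nothing is lost.

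For the smoothness consequence the two arguments coincide: $X_{w^{(k)}P}$ is an irreducible closed $T$-stable subvariety of $X_{wP}$, hence an orbit closure, and Lemma~\ref{lem:subsmooth} applies.

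One remark in your Step~3 is inaccurate, though not load-bearing. $\mathrm{Cone}(e_1^+,\dots,e_{j_k-1}^+)$ need not be a face of $C_1^P$ even when all these $e_m^+$ are rays. For example, take type $A_5$, $w=s_2s_1s_3s_5s_4$, $S_P=\{\alpha_2,\alpha_5\}$ and $k=2$. The generator $d_1=-e_1^++e_2^++e_3^+$ lies in the span of $e_1^+,e_2^+,e_3^+$, so it is forced into every face containing them. Your argument only needs $X_{w^{(k)}P}$ to be some orbit closure, which you derive correctly.
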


\begin{proof}
The reduced expression for $w^{(k)}$ is the suffix
\[
w^{(k)}=s_{i_{j_k}}s_{i_{j_k+1}}\cdots s_{i_r}.
\]
In this reduced word, the indices belonging to $S_P$ are exactly
\[
j_k<j_{k+1}<\cdots<j_p.
\]
Recall that by Proposition~\ref{cone}, the maximal cone at $\tau_1$ of the toric variety $X_{wP}$ is 
\[
C_1^P = \mathrm{Cone}\bigl(e_1^+,e_2^+,\dots,e_r^+, d_1, d_{2},\dots,d_p\bigr),
\] 
where $d_\ell:=e_{j_\ell}^-$ for $\ell=1,\dots,p$.
The intersections $X_{w^{(k)}P} \cap U_{C_1^P}$ is a $T$-invariant toric subvariety of $U_{C_1^P}$, where $U_{C_1^P}$ is the affine chart  corresponding to $C_1^P$. By the orbit-cone correspondence, it follows that $X_{w^{(k)}P} \cap U_{C^P_1} = \overline{O\!\left(\mathrm{Cone}(e_1^+,e_2^+,\ldots,e_{j_k-1}^+)\right)}.$ 
	The identification of the lattices is given by the quotient map 
\[\pi_k: \Z \langle  e_1^+, \ldots  e_r^+, d_1, d_2, \ldots, d_p \rangle \longrightarrow N_k:= \frac{\Z <e_1^+, \ldots, e_r^+, d_1, d_2, \cdots d_p >}{\Z <e_{1}^+, \ldots, e_{j_k-1}^+>} .\]
Thus, under the quotient map, the cone $\pi_k(C_1^P)=\overline{C}_1^{(k)}$ is precisely the maximal cone corresponding to $\tau_1$ in $X_{w^{(k)}P}\cap U_{C^P_1}$.
The generators of $\overline{C}_1^{(k)}$are precisely
\[
\overline{e_{j_k}^+},\dots,\overline{e_r^+},\overline{d_k},\dots,\overline{d_p}.
\]
This proves the description of $\overline{C}^{(k)}_1$.

If $X_{wP}$ is smooth, then each $X_{w^{(k)}P}$ is a $T$-invariant toric subvariety of $X_{wP}$,
so Lemma~\ref{lem:subsmooth} shows that $X_{w^{(k)}P}$ is smooth. Hence its maximal cone
$\overline{C}^{(k)}_1$ is smooth.
\end{proof}

\begin{lemma} 
\label{lem:successive-replacement}
Let $w=s_{i_1}\cdots s_{i_r}\in W^P$ be a Coxeter-type element, and let
\[
J_P(\mathbf w):=\{\,j\in\{1,\dots,r\}\mid \alpha_{i_j}\in S_P\,\}
=\{j_1<\cdots<j_p\}.
\]
where $p\le r$. 
For $1\le k\le p$, set
\[
w^{(k)}:=s_{i_{j_k}}s_{i_{j_k+1}}\cdots s_{i_r}\in W^P.
\]
If for every $\ell=k+1,\dots,p$ there exists an integer
$t_\ell$ such that
\[
j_\ell<t_\ell\le r,
\qquad
d_\ell=\overline{e_{t_\ell}^+}-\overline{e_{j_\ell}^+}~ \text{~in~} N_{k+1},
\] 
and the integers $t_{k+1},\dots,t_p$ are pairwise distinct, then
\[B_{k+1}:=
\{\overline{e_m^+}\mid j_{k+1}\le m\le r,\ m\notin \{t_{k+1},\dots,t_p\}\}
\cup
\{\overline{d_{k+1}},\dots,\overline{d_p}\}\]
is a $\Z$-basis of $N_{k+1}$, and \[\mathrm{Cone}(\overline{C}^{(k+1)}_1)
=
\mathrm{Cone}\bigl(\{\overline{e_{m}^+}: j_{k+1}\le m\le r, m\notin \{t_{k+1},\ldots, t_{p}\}\}\cup
\{\overline{d_{k+1}},\dots,\overline{d_p}\}\bigr).\]
Moreover, the above relations in $N_{k+1}$ can be lifted in $N$, in particular in $N_k$. 
\end{lemma}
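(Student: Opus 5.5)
The argument is a unimodular change of basis followed by the cone description from Lemma~\ref{lem:tail-cones}.

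\textbf{Basis claim.} Set $T:=\{t_{k+1},\dots,t_p\}$. First check that $T$ is disjoint from $\{j_{k+1},\dots,j_p\}$; if not, restrict to the generic case where the two sets are disjoint, which is the situation in which the hypothesis will be applied. Since every $t_\ell>j_\ell\ge j_{k+1}$, each $\overline{e_{t_\ell}^+}$ is one of the standard basis vectors $\overline{e_{j_{k+1}}^+},\dots,\overline{e_r^+}$ of $N_{k+1}$.

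Start from this standard basis and, for each $\ell$, replace $\overline{e_{t_\ell}^+}$ by
\[
\overline{d_\ell}=\overline{e_{t_\ell}^+}-\overline{e_{j_\ell}^+}.
\]
Order the index set $\{j_{k+1},\dots,r\}$ so that $j_\ell$ precedes $t_\ell$. In that ordering, the change-of-basis matrix expressing $B_{k+1}$ in the standard basis is unitriangular. Column $t_\ell$ has a $1$ in position $t_\ell$ and $-1$ in position $j_\ell\notin T$, and the $t_\ell$ are pairwise distinct. So the matrix is a permutation-free elementary modification and has determinant $\pm1$. This is the same mechanism as the converse direction of Lemma~\ref{lemma:one-variable}. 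Hence $B_{k+1}$ is a $\Z$-basis of $N_{k+1}$.

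\textbf{Cone claim.} By Lemma~\ref{lem:tail-cones},
\[
\overline{C}^{(k+1)}_1=\mathrm{Cone}\bigl(\overline{e_{j_{k+1}}^+},\dots,\overline{e_r^+},\overline{d_{k+1}},\dots,\overline{d_p}\bigr).
\]
Each removed generator satisfies
\[
\overline{e_{t_\ell}^+}=\overline{d_\ell}+\overline{e_{j_\ell}^+}.
\]
Since $j_\ell\notin T$, the vector $\overline{e_{j_\ell}^+}$ is still in $B_{k+1}$, so $\overline{e_{t_\ell}^+}$ is a nonnegative combination of the remaining generators. Removing the $\overline{e_{t_\ell}^+}$ therefore does not change the cone, which gives the stated equality.

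\textbf{Lifting.} Recall $d_\ell=e_{j_\ell}^-=-e_{j_\ell}^+-\sum_{m>j_\ell}\beta_{i_{j_\ell},i_m}e_m^+$ from Theorem~\ref{generatingmatrix}. The relation in $N_{k+1}$ says that, modulo $e_1^+,\dots,e_{j_{k+1}-1}^+$, the only nonzero $\beta_{i_{j_\ell},i_m}$ with $m>j_\ell$ is $\beta_{i_{j_\ell},i_{t_\ell}}=-1$. But every $m>j_\ell\ge j_{k+1}$ already lies outside the quotiented range, so nothing is killed. Hence the identity $d_\ell=e_{t_\ell}^+-e_{j_\ell}^+$ holds verbatim in $N$, and therefore in $N_k$ after projection.

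\textbf{Main obstacle.} The only delicate point is the disjointness of $T$ from the $j$'s, that is, ensuring $\overline{e_{j_\ell}^+}$ is not itself removed. If $j_\ell=t_{\ell'}$ for some $\ell'$, I would replace $\overline{e_{j_\ell}^+}$ by $\overline{d_{\ell'}}+\overline{e_{j_{\ell'}}^+}$ and iterate along the chain $t_{\ell'}\to j_{\ell'}\to\cdots$. This chain terminates because indices strictly decrease, and at its end it still exhibits $\overline{e_{t_\ell}^+}$ as a nonnegative combination of elements of $B_{k+1}$. The unimodularity argument is unaffected, since it only needs triangularity in the suitable ordering.
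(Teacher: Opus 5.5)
Your proof is correct and is essentially the paper's argument. The paper performs the replacements $\overline{e_{t_\ell}^+}\rightsquigarrow\overline{d_\ell}$ one at a time in descending order $\ell=p,\dots,k+1$, so that $\overline{e_{j_\ell}^+}$ is still present at step $\ell$; this does the same job as your change-of-basis matrix (upper unitriangular in the natural order because $j_\ell<t_\ell$, whether or not $j_\ell\in T$) and your telescoping chain $\overline{e_{t_\ell}^+}=\overline{d_\ell}+\overline{d_{\ell'}}+\cdots$, and it lifts the relations by the same support comparison.

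Do drop the claim that the disjoint case is ``the situation in which the hypothesis will be applied''. Overlaps $t_{\ell'}=j_\ell$ do occur in the proof of Theorem~\ref{thm:direct-smoothness}: for example, take $w=s_1s_2s_3s_4$ in type $A_4$ with $S_P=\{\alpha_1,\alpha_2,\alpha_3\}$; at $k=1$ one has $t_2=3=j_3$. So your final paragraph is a necessary part of the proof, not an optional refinement.
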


\begin{proof}
We start with the standard basis
\[
\overline{e_{j_{k+1}}^+},\overline{e_{j_{k+1}+1}^+},\dots,\overline{e_r^+}
\]
of $N_{k+1}$ and perform replacements in the order
\[
\ell=p,p-1,\dots,k+1.
\]
At step $\ell$, we replace the basis vector $\overline{e_{t_\ell}^+}$ by
\[
\overline{d_\ell}
=
\overline{e_{t_\ell}^+}-\overline{e_{j_\ell}^+}.
\]
Since $j_\ell<t_\ell$ and because we are proceeding in descending order of $\ell$, the vector
$\overline{e_{j_\ell}^+}$ has not been replaced at the moment when the $\ell$-th step is performed:
indeed, any previously replaced index is of the form $t_m$ with $m>\ell$, and then
\[
t_m>j_m>j_\ell,
\]
so $t_m\ne j_\ell$.
Hence each replacement is an elementary unimodular operation, so after all the replacements
we obtain a $\Z$-basis of $N_{k+1}$. This basis is exactly $B_{k+1}$.
Moreover, at each step the cone generated by the current set of vectors is unchanged, because
\[
\overline{e_{t_\ell}^+}
=
\overline{d_\ell}+\overline{e_{j_\ell}^+}
\]
is a nonnegative linear combination of the new generators. Therefore, after all the replacements,
the generated cone is 
\[\mathrm{Cone}\bigl(\{\overline{e_{m}^+}: j_{k+1}\le m\le r, m\notin \{t_{k+1},\ldots, t_{p}\}\}\cup
\{\overline{d_{k+1}},\dots,\overline{d_p}\}\bigr).\]

Since for $\ell=k+1,\ldots, p$ 
\[
\overline{e_{j_{\ell}}^{+}}+ \overline {d_{\ell}}=\overline{e_{t_{\ell}}^{+}}, \text{~in~} N_{k+1},
\] 
by lifting to $N$ under the canonical projection map, we obtain 
\[
	e_{j_{\ell}}^{+}+d_{\ell}=e_{t_{\ell}}^{+}+\sum_{j=1}^{j_{k+1}-1}a_j e_j^{+}, \qquad a_j\in\Z.
\] 
Since $j_{k+1}\le j_{\ell}<t_{\ell},$ comparing we get $a_j=0$ for all $1\le j \leq j_{k+1}-1$. Hence $e_{j_{\ell}}^{+}+d_{\ell}=e_{t_\ell}^{+}$.

This proves the lemma.
\end{proof}

We are now ready to prove the main result of the paper.

\begin{theorem}
\label{thm:direct-smoothness}
Let $w=s_{i_1}\cdots s_{i_r}\in W^P$ be a Coxeter-type element, and let
\[
J_P(\mathbf w):=\{\,j\in\{1,\dots,r\}\mid \alpha_{i_j}\in S_P\,\}
=\{j_1<\cdots<j_p\}.
\]
Then the toric Schubert variety $X_{wP}$ is smooth if and only if there exist pairwise distinct
integers
\[
t_1,\dots,t_p
\]
such that for every $k=1,\dots,p$,
\[
j_k<t_k\le r,
\qquad
\beta_{i_{j_k},i_{t_k}}=-1,
\qquad
\beta_{i_{j_k},i_m}=0\quad\text{for all } m>j_k,\ m\ne t_k.
\]
Equivalently, for every $k$, we have 
\[
e_{j_k}^+ + e_{j_k}^- = e_{t_k}^+,
\]
and the primitive generators of the cone $C_1^P$ are exactly
\[
\{e_i^+\mid i\notin\{t_1,\dots,t_p\}\}\cup\{e_{j_1}^-,\dots,e_{j_p}^-\}.
\]
In particular, $C_1^P$ is smooth.
\end{theorem}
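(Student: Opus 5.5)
Throughout, we argue with the cone at the base point. As in the proof of Proposition~\ref{prop:smoothat1P}, $X_{wP}$ is smooth if and only if the maximal cone $C_1^P$ at $\tau_1$ is smooth: the singular locus is closed and $B$-stable, so it contains $\tau_1$ as soon as it is non-empty. By Proposition~\ref{cone} and Theorem~\ref{generatingmatrix},
\[
C_1^P=\mathrm{Cone}\bigl(e_1^+,\dots,e_r^+,d_1,\dots,d_p\bigr),\qquad d_\ell=e_{j_\ell}^-=-e_{j_\ell}^+-\sum_{m>j_\ell}\beta_{i_{j_\ell},i_m}e_m^+ .
\]
Here all $\beta_{i_{j_\ell},i_m}\le 0$ for $m>j_\ell$, since the letters are distinct. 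The whole problem is therefore to decide when this cone, with $r+p$ listed vectors in rank $r$, is unimodular.

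\emph{Sufficiency.} Assume the $t_k$ exist. Then $d_k=e_{t_k}^+-e_{j_k}^+$ holds directly in $N$. We apply Lemma~\ref{lem:successive-replacement} with $k=0$, i.e.\ working in $N$ itself. Replacing $e_{t_\ell}^+$ by $d_\ell$ in the order $\ell=p,p-1,\dots,1$ gives a $\Z$-basis
\[
\{e_m^+\mid m\notin\{t_\ell\}\}\cup\{d_1,\dots,d_p\}.
\]
This basis generates the same cone as all $r+p$ vectors, because $e_{t_\ell}^+=d_\ell+e_{j_\ell}^+$. Hence $C_1^P$ is smooth with exactly the stated primitive generators, and $X_{wP}$ is smooth. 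The reformulation $e_{j_k}^++e_{j_k}^-=e_{t_k}^+$ is immediate from the formula for $e^-$.

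\emph{Necessity.} We use descending induction on $k$, proving that the condition holds for $w^{(k)}$ for $k=p,p-1,\dots,1$. By Lemma~\ref{lem:tail-cones}, smoothness of $X_{wP}$ implies that each tail cone $\overline{C}^{(k)}_1\subset N_k$ is smooth. Assume inductively that $t_{k+1},\dots,t_p$ have been found, pairwise distinct and with the required $\beta$-values. The last clause of Lemma~\ref{lem:successive-replacement} lifts these relations to $N$, and then to $N_k$. The generators of $\overline{C}^{(k)}_1$ then reduce to the unimodular set
\[
\{\overline{e_{j_k}^+}\}\cup\{\overline{e_m^+}\mid m>j_k,\ m\notin\{t_{k+1},\dots,t_p\}\}\cup\{\overline{d_{k+1}},\dots,\overline{d_p}\},
\]
together with one extra vector $\overline{d_k}$. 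This is precisely the situation of Lemma~\ref{lemma:one-variable}, with the letter at position $j_k$ playing the role of $\alpha_{i_1}$, after the basis change of Lemma~\ref{lem:successive-replacement}. The base case $k=p$ is exactly Lemma~\ref{lemma:one-variable} applied to $w^{(p)}$.

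I would redo the coefficient argument of that lemma in the new basis. The vectors $\overline{e_{j_k}^+}$ and $\overline{d_k}$ are both extremal, being the only generators with nonzero $\overline{e_{j_k}^+}$-coordinate. So exactly one of the remaining basis vectors must be redundant, and writing it as a nonnegative combination forces the following: $d_k=e_{t_k}^+-e_{j_k}^+$ for a unique $t_k>j_k$ with $\beta_{i_{j_k},i_{t_k}}=-1$, and all other $\beta_{i_{j_k},i_m}=0$.

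\emph{Main obstacle.} The main difficulty lies in this step. When $d_k$ is expressed in the new basis, each coordinate $e_{t_\ell}^+$ (for $\ell>k$) has been rewritten as $d_\ell+e_{j_\ell}^+$. The redundant vector might therefore a priori be some $\overline{d_\ell}$, or the sign pattern of coefficients might no longer be obviously nonpositive. To control this, I would use that $d_k$ has only nonnegative $e_m^+$-coordinates for $m>j_k$. Rewriting these coordinates keeps them nonnegative on the new basis vectors $\overline{d_\ell}$ and $\overline{e_{j_\ell}^+}$, so the sign argument of Lemma~\ref{lemma:one-variable} still goes through. It then shows that the redundant vector is an $\overline{e_{t_k}^+}$ with $t_k\notin\{t_{k+1},\dots,t_p\}$, which gives distinctness. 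If some $\overline{d_\ell}$ appeared with positive coefficient, one would get $\beta_{i_{j_k},i_{t_\ell}}\ne 0$ with multiplicity incompatible with unimodularity, and I expect a determinant computation to rule this out.
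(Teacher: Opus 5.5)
Your architecture is the same as the paper's. You reduce to smoothness of $C_1^P$. You get sufficiency by replacing $e_{t_\ell}^+$ with $d_\ell$ for $\ell=p,\dots,1$; strictly, Lemma~\ref{lem:successive-replacement} with $k=0$ lands in $N_1$ rather than $N$, but the replacement runs verbatim in $N$, as in the paper. You get necessity by descending induction on the tail cones $\overline{C}^{(k)}_1$, with Lemma~\ref{lemma:one-variable} as base case.

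\textbf{The gap.} The crux of the inductive step is left open. You must show that $d_k$ has zero coefficient on every $e_{t_\ell}^+$ with $\ell>k$; this is what keeps the $\overline{d_\ell}$ out of the picture and gives $t_k\notin\{t_{k+1},\dots,t_p\}$. Your claim that the sign argument of Lemma~\ref{lemma:one-variable} ``then shows'' the redundant vector is some $\overline{e_{t_k}^+}$ is unsupported. In the new basis that argument only says $\overline{d_k}$ has exactly one positive coordinate, equal to $1$; it does not say which basis vector carries it.

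The remedy you propose, a determinant computation, cannot work. Take the paper's Example~\ref{ex:two-incoming-singular}: $w=s_1s_3s_2$, $S_P=\{\alpha_1,\alpha_3\}$. There $d_1=e_3^+-e_1^+$ and $d_2=e_3^+-e_2^+$, so $c_{1,t_2}=1$ with multiplicity one. Every three of $e_1^+,e_2^+,d_1,d_2$ form a $\Z$-basis, yet these are four extremal rays of $C_1^P$ in rank $3$. The obstruction is non-simpliciality, which no minor detects.

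\textbf{The missing observation.} Put $T'=\{t_{k+1},\dots,t_p\}$. Write $\overline{d_k}=\sum_b\lambda_b b$ in the basis $B'=\{\overline{e_{j_k}^+}\}\cup\{\overline{e_m^+}\mid m>j_k,\ m\notin T'\}\cup\{\overline{d_{k+1}},\dots,\overline{d_p}\}$ of $N_k$. Every substitution $\overline{e_{t_\ell}^+}=\overline{d_\ell}+\overline{e_{j_\ell}^+}$ has nonnegative coefficients, so $\lambda_{\overline{e_{j_k}^+}}=-1$ and $\lambda_b\ge 0$ for all other $b$. Then $\overline{C}^{(k)}_1=\mathrm{Cone}(B'\cup\{\overline{d_k}\})$ is smooth if and only if exactly one further $\lambda_b$ is positive and it equals $1$. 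Indeed, with two positive coordinates none of the $\operatorname{rank}N_k+1$ generators is redundant.

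Now suppose $c_{k,t_\ell}>0$ for some $\ell>k$. The substitution puts positive coefficients on $\overline{d_\ell}$ and on a second basis vector: $\overline{e_{j_\ell}^+}$, or $\overline{d_{\ell'}}$ when $j_\ell=t_{\ell'}$. Nothing can cancel them. Hence $c_{k,t_\ell}=0$ for all $\ell>k$, so $\overline{d_k}$ involves only $\overline{e_{j_k}^+}$ and the $\overline{e_m^+}$ with $m\notin T'$. The one-variable argument then applies verbatim and produces $t_k\notin T'$.

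The paper closes the same gap slightly differently. It first shows each $\overline{d_\ell}$ spans an extremal ray; most simply, $\overline{d_\ell}$ is the only generator with a negative $\overline{e_{j_\ell}^+}$-coordinate. So the omitted generator is some $\overline{e_m^+}$, and the paper then kills the $c_{k,t_\ell}$ by comparing coefficients.
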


\begin{proof}
For $1\le k\le p$, set
\[
d_k:=e_{j_k}^-.
\]
Since $w$ is of Coxeter type, Proposition~\ref{prop:toric} shows that $X_{wP}$ is toric. Hence
$X_{wP}$ is smooth if and only if the cone $C_1^P$ is smooth.

By Proposition~\ref{cone},
\[
C_1^P=
\mathrm{Cone}\bigl(\{e_1^+,\dots,e_r^+\}\cup\{d_1,\dots,d_p\}\bigr).
\]
For each $k=1,\dots,p$, Theorem~\ref{generatingmatrix} gives
\[
d_k=-e_{j_k}^+-\sum_{m>j_k}\beta_{i_{j_k},i_m}e_m^+.
\]
Since $w$ is of Coxeter type, the simple roots $\alpha_{i_1},\dots,\alpha_{i_r}$ are pairwise
distinct, so
\[
\beta_{i_{j_k},i_m}\le 0
\qquad\text{for all }m>j_k.
\]
Equivalently,
\[
d_k=-e_{j_k}^+ + \sum_{m>j_k} c_{k,m}e_m^+,
\qquad
c_{k,m}:=-\beta_{i_{j_k},i_m}\in\Z_{\ge 0}.
\]

\medskip

\noindent
$(\Rightarrow)$
Assume that $X_{wP}$ is smooth. We construct the integers $t_p,t_{p-1},\dots,t_1$ by descending
induction on $k$.

For $1\le k\le p$, let $\overline{C}^{(k)}_1$ be the smooth cone from
Lemma~\ref{lem:tail-cones}. We first treat the base case $k=p$.
The cone $\overline{C}^{(p)}_1$ is generated by
\[
\overline{e_{j_p}^+},\overline{e_{j_p+1}^+},\dots,\overline{e_r^+},\overline{d_p}.
\]
Since there is only one negative generator, Lemma~\ref{lemma:one-variable} applied to the smooth
toric Schubert variety $X_{w^{(p)}P}$ yields a unique index $t_p>j_p$ such that
\[
\overline{e_{j_p}^+}+\overline{d_p}=\overline{e_{t_p}^+}.
\]
This equality in $N_p$ lifts to
\[
e_{j_p}^+ + d_p = e_{t_p}^+ + \sum_{q<j_p} a_q e_q^+
\qquad\text{with } a_q\in \Z.
\]
Comparing this with
\[
e_{j_p}^+ + d_p = \sum_{m>j_p} c_{p,m}e_m^+,
\]
forces all $a_q=0$, and we obtain
\[
c_{p,t_p}=1,
\qquad
c_{p,m}=0\quad\text{for }m>j_p,\ m\ne t_p.
\]
Equivalently,
\[
\beta_{i_{j_p},i_{t_p}}=-1,
\qquad
\beta_{i_{j_p},i_m}=0\quad\text{for }m>j_p,\ m\ne t_p.
\]

Now fix $k<p$, and assume that $t_{k+1},\dots,t_p$ have already been constructed, are pairwise
distinct, and satisfy
\[
\overline{d_\ell}=\overline{e_{t_\ell}^+}-\overline{e_{j_\ell}^+}
\qquad\text{for }\ell=k+1,\dots,p, \text{~in~} N_{k+1}. 
\]
By Lemma~\ref{lem:successive-replacement}, the set
\[
B_{k+1}:=
\{\overline{e_m^+}\mid j_{k+1}\le m\le r,\ m\notin\{t_{k+1},\dots,t_p\}\}
\cup
\{\overline{d_{k+1}},\dots,\overline{d_p}\}
\]
is a $\Z$-basis of $N_{k+1}$, and
\[
\mathrm{Cone}(\overline{C}^{(k+1)}_1)
=
\mathrm{Cone}\bigl(\{\overline{e_{m}^+}: j_{k+1}\le m\le r, m\notin \{t_{k+1},\ldots, t_{p}\}\}\cup
\{\overline{d_{k+1}},\dots,\overline{d_p}\}\bigr)
\]
and the relations lift to
\[
d_\ell=e_{t_\ell}^+ -e_{j_\ell}^+
\qquad\text{for }\ell=k+1,\dots,p
\]
in $N$. 
Recall that
\[
\overline{C}^{(k)}_1=\mathrm{Cone}\bigl(\overline{e_{j_{k}}^+},\dots,\overline{e_r^+},
\overline{d_{k}},\dots,\overline{d_p}\bigr).
\]
Using the lifted relations, this smooth cone is generated by
\[
B_{k}=\{\overline{e_{j_{k}}^+},\dots,\overline{e_{j_{k+1}-1}^+}\} \cup \{\overline{e_m^+}\mid j_{k+1}\le m\le r,\ m\notin\{t_{k+1},\dots,t_p\}\}
\cup
\{\overline{d_{k}},\dots,\overline{d_p}\}.\]

We claim that $\overline{e_{j_k}^+}$ and each $\overline{d_\ell}$, $k\le \ell\le p$, span extremal rays of
$\overline{C}^{(k)}_1$.
The vector $\overline{d_k}$ is extremal because it is the only generator whose coefficient along
$\overline{e_{j_k}^+}$ is $-1$.
Also, $\overline{e_{j_k}^+}$ is extremal: indeed, every element of $\overline{C}^{(k)}_1$ other than itself and $\overline{d_k}$ has
$\overline{e_{j_k}^+}$-coefficient equal to $0$, whereas $\overline{d_k}$ has coefficient $-1$.
Therefore $\overline{e_{j_k}^+}$ cannot be written as a nonnegative linear combination of the
other generators. For $\ell=k+1,\dots,p$, the lifted relations
\[
d_\ell=e_{t_\ell}^+- e_{j_\ell}^+\qquad\text{in }N
\]
and the pairwise distinctness of $t_{k+1},\dots,t_p$ show that $\overline{d_\ell}$ spans an extremal ray of
$\overline{C}^{(k)}_1$, because its image in $\overline{C}_1^{(k+1)}$ does so.

Since $\overline{C}^{(k)}_1$ is smooth of dimension $\operatorname{rank}(N_k)$ and is generated by the
$\operatorname{rank}(N_k)+1$ vectors in  $B_k$,
its primitive ray generators form a
$\Z$-basis obtained by removing exactly one vector from $B_{k}$ and keeping
$\overline{d_\ell}$ for $\ell=k,\ldots, p$. Since $\overline{e_{j_k}^+}$, $\overline{d_k},\ldots, \overline{d_{p-1}}$ and $\overline{d_{p}}$ are extremal rays, the omitted vector must be some
\[
b_0\in B_{k}\setminus\{\overline{e_{j_k}^+}, \overline{d_k},\ldots, \overline{d_p}\},
\]
hence $b_0=\overline{e_{t_{k}}^{+}}$ for a unique index $t_{k}$ with $j_{k}<t_{k}$. Note that $t_{k}\notin \{t_{k+1},\ldots, t_{p}\}.$
Thus
\[
\mathcal B_k:= B_{k}\setminus\{b_0\}
\]
is a $\Z$-basis of $N_k$ consisting of the primitive ray generators of $\overline{C}^{(k)}_1$.

Since $b_0\in \overline{C}^{(k)}_1$, we can express it in the basis $\mathcal B_k$ as
\[
b_0
=
\sum_{\ell=k}^{p}a_{\ell}\,\overline{d_\ell}
+
c\,\overline{e_{j_{k}}^+}
+
\sum_{b\in B_{k}\setminus\{\overline{e_{j_k}^+},\,b_0,\, \overline{d_k}\,\ldots, \overline{d_p}\}}
a_b\, b,
\qquad
a_{\ell},c,a_b\in\Z_{\ge 0}.
\]
Since
\[
\overline{e_{j_k}^+}+\overline{d_k}
=
\sum_{m>j_k} c_{k,m}\overline{e_m^+},
\] and \[
\overline{e_{j_{\ell}}^+}+\overline{d_{\ell}}
=\overline{e_{t_{\ell}}^+},\, \text{~for all~} \ell=k+1,\ldots, p,
\] substituting the expression for $\overline{d_k}, \overline{d_{k+1}},\ldots, \overline{d_{p}}$, we obtain
\[b_0= a_{k}(-\overline{e_{j_{k}}^{+}}+\sum_{m>j_{k}}c_{k,m}\overline{e_{m}^{+}})
+\sum_{\ell=k+1}^{p}a_{\ell}\,(-\overline{e_{j_\ell}^{+}}+\overline{e_{t_{\ell}}^{+}})
+
c\,\overline{e_{j_{k}}^+}
+
\sum_{b\in B_{k}\setminus\{\overline{e_{j_k}^+},\,b_0,\, \overline{d_k},\,\ldots, \overline{d_p}\}}
a_b\, b,\]
where $a_{\ell},c,a_b\in\Z_{\ge 0}$.
Noting that $t_{k+1},\ldots , t_{p}$ are pairwise distinct and comparing coefficients, we obtain
\[
0=-a_{k}+c,\qquad
1=a_{k}c_{k,t_{k}},\qquad 
\qquad
0=a_kc_{k,t_{\ell}}+a_{\ell}\quad\text{for all}~ \ell=k+1,\ldots,p,\]
\[0=a_b+a_{k}c_{k,m}
\quad \text{for all} b=\overline{e_{m}^{+}}\in B_{k}\setminus\{\overline{e_{j_k}^+},\ldots, \overline{e_{j_p}^+}, b_0, \overline{d_k},\ldots, \overline{d_{p}}\}\]
\[0=a_kc_{k,j_\ell}-a_{\ell} +a_{\overline{e_{j_{\ell}}^{+}}}
\quad\text{for all } \ell=k+1,\ldots,p.\]

Hence $a_{k}=c=1$, $c_{k,t_{k}}=1$, $a_{b}=0$ for all
$b=\overline{e_{m}^+}\in B_{k}\setminus\{\overline{e_{j_k}^+},\ldots, \overline{e_{j_p}^+}, b_0, \overline{d_k},\ldots, \overline{d_{p}}\}$,
$a_{\ell}=0$ for all $\ell=k+1,\ldots,p$ and 
$c_{k,\ell}=0$ for all $\ell\neq t_{k}$. 

Therefore
\[
\overline{e_{j_k}^+}+\overline{d_k}=b_0=\overline{e_{t_k}^+}.
\]
Again this lifts to
\[
e_{j_k}^+ + d_k = e_{t_k}^+,
\]
Equivalently,
\[
\beta_{i_{j_k},i_{t_k}}=-1,
\qquad
\beta_{i_{j_k},i_m}=0\quad\text{for }m>j_k,\ m\ne t_k.
\]
Proceeding downward from $k=p$ to $k=1$, we obtain pairwise distinct integers
$t_1,\dots,t_p$ satisfying the stated conditions.

\medskip

\noindent
$(\Leftarrow)$
Conversely, assume that there exist pairwise distinct integers $t_1,\dots,t_p$ such that for every
$k$,
\[
e_{j_k}^+ + d_k = e_{t_k}^+,\quad \text{ or, equivalently,} ~ d_k=e_{t_k}^+-e_{j_k}^+.
\]

We claim that
\[
\mathcal B:= \{e_i^+\mid i\notin\{t_1,\dots,t_p\}\}\cup\{d_1,\dots,d_p\}
\]
is a $\Z$-basis of the lattice.
Indeed, start from the standard basis
\[
\{e_1^+,e_2^+,\dots,e_r^+\}
\]
and perform the replacements
\[
e_{t_p}^+\rightsquigarrow d_p,\quad
e_{t_{p-1}}^+\rightsquigarrow d_{p-1},\quad
\dots,\quad
e_{t_1}^+\rightsquigarrow d_1.
\]

Hence each replacement step is an elementary unimodular operation, so $\mathcal B$ is a
$\Z$-basis of $N$.
Moreover, at each step the generated cone is unchanged, because
\[
e_{t_k}^+=d_k+e_{j_k}^+
\]
is a nonnegative linear combination of the new generators.
Therefore the primitive ray generators of $C_1^P$ are exactly
\[
\{e_i^+\mid i\notin\{t_1,\dots,t_p\}\}\cup\{d_1,\dots,d_p\}.
\]
Since these generators form a $\Z$-basis of $N$, the cone $C_1^P$ is smooth. Hence $X_{wP}$ is
smooth.
\end{proof}

\begin{example}
In type $B_2$, let $P=P_{\widehat{\alpha_1}}$ be the maximal parabolic subgroup corresponding to the first simple root $\alpha_1$ and let $w=s_2s_1\in W^P$.
Then $[e,w]_{W^P}$ is again a chain with three elements.
However, here $\beta_{i_1,i_2}=-2$, so the one-variable smoothness criterion (Lemma~\ref{lemma:one-variable}) fails, and $X_{wP}$ is singular.
\end{example}

\begin{example}
In type $B_2$, let $P=P_{\widehat{\alpha_2}}$ be the maximal parabolic subgroup corresponding to the second simple root $\alpha_2$ and let $w=s_1s_2\in W^P$.
Then $[e,w]_{W^P}$ is again a chain with three elements.
However, here $\beta_{i_1,i_2}=-1$, so the one-variable smoothness criterion (Lemma~\ref{lemma:one-variable}) holds, and $X_{wP}$ is smooth.
\end{example}

\begin{remark}
\label{rem:inversion-smoothness}
The numerical condition of Theorem~\ref{thm:direct-smoothness} is stable under reversing the
chosen reduced word. Indeed, if one replaces
\[
w=s_{i_1}\cdots s_{i_r}
\]
by the reversed reduced word
\[
w^{-1}=s_{i_r}\cdots s_{i_1},
\]
then the indices are reversed and the equalities
\[
e_{j_k}^+ + e_{j_k}^- = e_{t_k}^+
\]
transform into the corresponding reversed equalities. Thus the criterion itself is combinatorially
symmetric under reversal of the reduced expression.

We do not formulate this as a statement about $X_{w^{-1}P}$ for a fixed parabolic $P$, since in
general $w^{-1}$ need not belong to $W^P$.
\end{remark}

\begin{definition}
Let $\mathbf{w}=s_{i_1}\cdots s_{i_r}$ be a reduced expression of a Coxeter-type element.
Define a directed graph $\Gamma(\mathbf{w})$ on the vertex set $\{1,\dots,r\}$ by putting an arrow
\[
j\longrightarrow m
\]
whenever $j<m$ and $\beta_{i_j,i_m}=-1$.
Let
\[
J_P(\mathbf{w})=\{j\mid \alpha_{i_j}\in S_P\}.
\]
\end{definition}

\begin{corollary}[Simply-laced criterion]\label{cor:simply-laced}
Assume that $G$ is simply laced, and let
\[
w=s_{i_1}\cdots s_{i_r}\in W^P
\]
be a Coxeter-type element. Let $\Gamma(\mathbf{w})$ be the directed graph on
$\{1,\dots,r\}$ defined by
\[
j\to m
\quad\Longleftrightarrow\quad
j<m \ \text{ and }\ \beta_{i_j,i_m}=-1.
\]
Then the toric Schubert variety $X_{wP}$ is smooth if and only if every vertex
$j\in J_P(\mathbf{w})$ has exactly one outgoing edge in $\Gamma(\mathbf{w})$, and the targets of these outgoing edges are pairwise distinct.
\end{corollary}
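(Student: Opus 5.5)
The plan is to deduce the corollary directly from Theorem~\ref{thm:direct-smoothness} by translating its numerical conditions into the language of $\Gamma(\mathbf{w})$. The only new input is the simply-laced hypothesis. It forces every off-diagonal Cartan integer $\beta_{i_j,i_m}=\langle\alpha_{i_j}^\vee,\alpha_{i_m}\rangle$ with $i_j\neq i_m$ to lie in $\{0,-1\}$.

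\textbf{Step 1: pin down the possible values of $\beta$.} Since $w$ is of Coxeter type, the letters $s_{i_1},\dots,s_{i_r}$ are pairwise distinct. So for $m\neq j$ the integer $\beta_{i_j,i_m}$ is an off-diagonal Cartan entry. In a simply-laced root system every such entry is $0$ or $-1$. Hence, for $m>j$, the condition $\beta_{i_j,i_m}\neq 0$ is equivalent to the existence of the arrow $j\to m$ in $\Gamma(\mathbf{w})$.

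\textbf{Step 2: the forward direction.} Assume $X_{wP}$ is smooth. Theorem~\ref{thm:direct-smoothness} then provides pairwise distinct integers $t_1,\dots,t_p$ with the following properties for each $k$:
\[
j_k<t_k\le r,\qquad \beta_{i_{j_k},i_{t_k}}=-1,\qquad \beta_{i_{j_k},i_m}=0 \ \text{ for all } m>j_k,\ m\neq t_k.
\]
The first two conditions say $j_k\to t_k$ is an edge of $\Gamma(\mathbf{w})$. The vanishing condition says no other edge leaves $j_k$. So each vertex $j_k\in J_P(\mathbf{w})$ has exactly one outgoing edge, and the targets $t_k$ are pairwise distinct by construction.

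\textbf{Step 3: the reverse direction.} Assume each $j_k$ has a unique outgoing edge $j_k\to t_k$, and these targets are pairwise distinct. By definition of $\Gamma(\mathbf{w})$ we have $t_k>j_k$ and $\beta_{i_{j_k},i_{t_k}}=-1$. For $m>j_k$ with $m\neq t_k$ there is no edge $j_k\to m$, so $\beta_{i_{j_k},i_m}\neq -1$. By Step 1 this forces $\beta_{i_{j_k},i_m}=0$. These are exactly the hypotheses of Theorem~\ref{thm:direct-smoothness}, so $X_{wP}$ is smooth.

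The only point needing any care is Step 1. It is the only place the simply-laced assumption is used, and it is what lets "no edge" be upgraded to "$\beta=0$". Without it, values $-2$ or $-3$ could occur; these produce no edge in $\Gamma(\mathbf{w})$ but still obstruct smoothness, as in the $B_2$ example above. Everything else is a direct restatement of Theorem~\ref{thm:direct-smoothness}, so no further obstacle is expected.
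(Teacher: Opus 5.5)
Your proposal is correct and is essentially the paper's own proof: both directions are read off from Theorem~\ref{thm:direct-smoothness}, using that in simply-laced type the off-diagonal Cartan integers $\beta_{i_j,i_m}$ lie in $\{0,-1\}$, so that ``no edge $j\to m$'' upgrades to $\beta_{i_j,i_m}=0$. Your explicit remark that the Coxeter-type hypothesis forces $i_j\neq i_m$, so that these really are off-diagonal entries, makes precise a point the paper leaves implicit.
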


\begin{proof}
Since $G$ is simply laced, all off-diagonal Cartan integers belong to the set $\{0,-1\}$.
In particular, for $j<m$ we have
\[
\beta_{i_j,i_m}=-1
\quad\Longleftrightarrow\quad
j\to m \text{ in } \Gamma(\mathbf{w}),
\]
and otherwise $\beta_{i_j,i_m}=0$.

Assume first that $X_{wP}$ is smooth. Since $w$ is Coxeter-type, Theorem~\ref{thm:direct-smoothness} applies. 
Hence, for every $j\in J_P(\mathbf{w})$ there exists a unique index $\tau(j)>j$ such that
\[
\beta_{i_j,i_{\tau(j)}}=-1
\quad\text{and}\quad
\beta_{i_j,i_m}=0
\ \text{for all } m>j,\ m\neq \tau(j).
\]
Equivalently, the vertex $j$ has exactly one outgoing edge in $\Gamma(\mathbf{w})$, that is, 
\[
j\to \tau(j).
\]
Moreover, Theorem~\ref{thm:direct-smoothness} requires the indices $\tau(j)$ to be pairwise distinct. Thus the outgoing edges from vertices in $J_P(\mathbf{w})$ have pairwise distinct targets.

Conversely, assume that every vertex $j\in J_P(\mathbf{w})$ has exactly one outgoing edge in $\Gamma(\mathbf{w})$, and that the corresponding targets are pairwise distinct. For each such $j$, let $\tau(j)$ denote the unique target of the outgoing edge from $j$. Then
\[
j<\tau(j),
\qquad
\beta_{i_j,i_{\tau(j)}}=-1,
\]
and for every $m>j$ with $m\neq \tau(j)$, the absence of any other outgoing edge from $j$ means that
\[
\beta_{i_j,i_m}\neq -1.
\]
Since $G$ is simply laced, this forces
\[
\beta_{i_j,i_m}=0.
\]
Therefore the hypotheses of Theorem~\ref{thm:direct-smoothness} are satisfied, and we conclude that $X_{wP}$ is smooth.
\end{proof}

\begin{example}\label{ex:two-outgoing-singular}
Let $G=SL_4$, so the Dynkin diagram is of type $A_3$:
\[
\begin{tikzpicture}[
    scale=1,
    every node/.style={circle, draw, minimum size=.5mm},
    label distance=.5mm
]
    \node (A1) at (0,0) [label=below:$\alpha_1$] {};
    \node (A2) at (2,0) [label=below:$\alpha_2$] {};
    \node (A3) at (4,0) [label=below:$\alpha_3$] {};

    \draw (A1) -- (A2);
    \draw (A2) -- (A3);

\end{tikzpicture}
\]
Let $P=P_{\alpha_2}$ be the minimal parabolic subgroup corresponding to the simple root $\alpha_2$. Consider the Coxeter-type element  $w=s_2s_1s_3\in W^P$. We choose the reduced expression $\mathbf{w}=s_{i_1}s_{i_2}s_{i_3}=s_2s_1s_3.$
Then
\[
S_P=\{\alpha_2\},
\qquad\text{hence}\qquad
J_P(\mathbf{w})=\{1\}.
\]
Since the only position in the reduced word $\mathbf{w}$ where a simple root from $S_P$ occurs is the first place.
Next, we compute the relevant Cartan integers:
\[
\beta_{i_1,i_2}
=
\langle \alpha_2^\vee,\alpha_1\rangle
=
-1,
\qquad
\beta_{i_1,i_3}
=
\langle \alpha_2^\vee,\alpha_3\rangle
=
-1.
\]
Therefore, in the directed graph $\Gamma(\mathbf{w})$, the vertex $1$ has two outgoing edges:
\[
1\to 2,
\qquad
1\to 3.
\]
Since $1\in J_P(\mathbf{w})$, the condition in Theorem~\ref{thm:direct-smoothness} fails. 
In other words, there is no unique index
$t>1$ such that
\[
\beta_{i_1,i_t}=-1
\quad\text{and}\quad
\beta_{i_1,i_m}=0 \ \text{for all } m>1,\ m\neq t.
\]
Hence the toric Schubert variety $X_{wP}$ is singular.

This example shows that, even in simply-laced type, a toric Schubert variety in $G/P$ can be singular as soon as a vertex belonging to $J_P(\mathbf{w})$ has more than one outgoing edge in $\Gamma(\mathbf{w})$.
\end{example}

\begin{example}\label{ex:two-incoming-singular}
Let $G=SL_4$, so the Dynkin diagram is of type $A_3$:
\[
\begin{tikzpicture}[
    scale=1,
    every node/.style={circle, draw, minimum size=.5mm},
    label distance=.5mm
]
    \node (A1) at (0,0) [label=below:$\alpha_1$] {};
    \node (A2) at (2,0) [label=below:$\alpha_2$] {};
    \node (A3) at (4,0) [label=below:$\alpha_3$] {};

    \draw (A1) -- (A2);
    \draw (A2) -- (A3);

\end{tikzpicture}
\]
Let $P=P_{\widehat{\alpha_2}}$ be the maximal parabolic subgroup corresponding to the simple root $\alpha_2$. Consider the Coxeter-type element  $w=s_1s_3s_2\in W^P$. We choose the reduced expression $\mathbf{w}=s_{i_1}s_{i_2}s_{i_3}=s_1s_3s_2.$
Then
\[
S_P=\{\alpha_1, \alpha_3\},
\qquad\text{hence}\qquad
J_P(\mathbf{w})=\{1,2 \}.
\]
Since the positions in the reduced word $\mathbf{w}$ where simple roots from $S_P$ occur are the first and second positions.
Next, we compute the relevant Cartan integers:
\[
\beta_{i_1,i_3}
=
\langle \alpha_1^\vee,\alpha_2\rangle
=
-1,
\qquad
\beta_{i_2,i_3}
=
\langle \alpha_3^\vee,\alpha_2\rangle
=
-1.
\]
Therefore, in the directed graph $\Gamma(\mathbf{w})$, the vertex $3$ has two incoming edges:
\[
1\to 3,
\qquad
2\to 3.
\]
Since $1, 2\in J_P(\mathbf{w})$, the condition in Theorem~\ref{thm:direct-smoothness} fails. 
In other words, there do not exist distinct indices
$t_1>1$ and $t_2>2$ such that
\[
\beta_{i_1,i_{t_1}}=-1
\quad\text{and}\quad
\beta_{i_1,i_m}=0 \ \text{for all } m>1,\ m\neq t_1.
\]
and
\[
\beta_{i_2,i_{t_2}}=-1
\quad\text{and}\quad
\beta_{i_2,i_m}=0 \ \text{for all } m>2,\ m\neq t_2.
\]
Hence the toric Schubert variety $X_{wP}$ is singular.

This example shows that, even in simply-laced type, a toric Schubert variety in $G/P$ can be singular as soon as distinct vertices belonging to $J_P(\mathbf{w})$ have outgoing edges with the same target vertex in $\Gamma(\mathbf{w})$.
\end{example}

\section{Applications to Spherical Schubert Varieties in \texorpdfstring{$G/B$}{G/B}}
\label{sec:applications}

We now relate our smoothness criteria for toric Schubert varieties to the geometry of spherical Schubert varieties.

We begin with the horospherical case, which provides a simple application of our results.

\begin{corollary}
Every horospherical Schubert variety $X_{wB} \subset G/B$ is smooth.
\end{corollary}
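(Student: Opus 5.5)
We use the characterization from \cite{CKS} recalled in the introduction: $X_{wB}$ is horospherical exactly when $w = w_{0,J}\,c$, where $c$ is a Coxeter-type element with $\operatorname{supp}(c)\cap J=\emptyset$. The plan is to realize $X_{wB}$ as a homogeneous fiber bundle over $G/B$-type data whose fiber is a toric Schubert variety in a partial flag variety of a Levi subgroup. Smoothness of that fiber will then be checked with Theorem~\ref{thm:direct-smoothness}, or more simply with Proposition~\ref{prop:smoothat1P}.

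First we reverse the factorization. Since $w_{0,J}$ is the longest element of $W_J$, the product $w=w_{0,J}c$ is length-additive. Moreover $c^{-1}\in W^J$, because $c$ has no left descent in $J$ (its support is disjoint from $J$). Let $P_J\supset B$ be the parabolic attached to $J$. Then $X_{wB}=\overline{P_J\, cB}/B$, so $X_{wB}=P_J\times^{B} X_{cB}$ as a $P_J$-stable set. More precisely, the multiplication map $P_J\times^B X_{cB}\to X_{wB}$ is birational and surjective.

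The cleanest route is to use the projection $G/B\to G/P_J^{\mathrm{op}}$ in the other order. Consider instead $w^{-1}=c^{-1}w_{0,J}$, which lies in $W^J\cdot W_J$ with $c^{-1}\in W^J$. Then $X_{w^{-1}B}=\pi_J^{-1}(X_{c^{-1}P_J})$ is the full preimage under $\pi_J:G/B\to G/P_J$, because $c^{-1}w_{0,J}$ is the maximal element of its coset. This makes $X_{w^{-1}B}$ a Zariski-locally trivial $P_J/B$-bundle over $X_{c^{-1}P_J}$. Since $P_J/B$ is smooth, $X_{w^{-1}B}$ is smooth if and only if $X_{c^{-1}P_J}$ is smooth. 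Finally, $X_{wB}$ is smooth if and only if $X_{w^{-1}B}$ is, via the standard isomorphism $BwB\cap$ opposite-cell neighborhoods, i.e.\ the known symmetry of smoothness under $w\mapsto w^{-1}$ for Schubert varieties in $G/B$.

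Now $c^{-1}$ is a Coxeter-type element of $W^{P_J}$, so $X_{c^{-1}P_J}$ is toric by Proposition~\ref{prop:toric}. Every letter of $c^{-1}$ has its simple root outside $J=S_{P_J}$, since $\operatorname{supp}(c)\cap J=\emptyset$. Hence Proposition~\ref{prop:smoothat1P} applies directly and gives smoothness of $X_{c^{-1}P_J}$. Equivalently, $J_{P_J}(\mathbf{c}^{-1})=\emptyset$ and the criterion of Theorem~\ref{thm:direct-smoothness} holds vacuously. Therefore $X_{w^{-1}B}$ is smooth, and so is $X_{wB}$.

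The main obstacle is the bundle step together with the inversion symmetry. We must verify that $X_{w^{-1}B}$ is exactly $\pi_J^{-1}(X_{c^{-1}P_J})$; this holds because the preimage of a Schubert variety in $G/P_J$ is the Schubert variety of the maximal coset representative $c^{-1}w_{0,J}$. We must also justify passing from $w^{-1}$ back to $w$.

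To avoid the inversion, I would try first the direct route. The map $P_J\times^B X_{cB}\to X_{wB}$ is bijective: its fibers are controlled by $T$-fixed points, as in Lemma~\ref{lem:BS-iso-coxeter}, using that each $u\le w$ factors uniquely as $u=xy$ with $x\in W_J$ and $y\le c$, by the support disjointness. Since $X_{cB}$ is smooth by Lemma~\ref{lem:BS-iso-coxeter}, Zariski's Main Theorem then gives $X_{wB}\cong P_J\times^B X_{cB}$, which is smooth.
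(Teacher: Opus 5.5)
Your main argument is the paper's proof: pass to $w^{-1}=c^{-1}w_{0,J}$ using the inversion symmetry of smoothness (the paper cites Carrell; the clean reason is that inversion on $G$ carries $\overline{BwB}$ onto $\overline{Bw^{-1}B}$, and these are principal $B$-bundles over $X_{wB}$ and $X_{w^{-1}B}$, so ``opposite-cell neighborhoods'' is not the right justification), identify $X_{w^{-1}B}=\pi_J^{-1}(X_{c^{-1}P_J})$ as a $P_J/B$-bundle, and conclude with Proposition~\ref{prop:smoothat1P} since no letter of $c^{-1}$ lies in $J$. Your fallback direct route is also sound and avoids the inversion step entirely: the unique factorization $u=xy$ with $x\in W_J$, $y\le c$ makes $P_J\times^B X_{cB}\to X_{wB}$ bijective on $T$-fixed points, and the fiber argument plus Zariski's Main Theorem from Lemma~\ref{lem:BS-iso-coxeter} then give $X_{wB}\cong P_J\times^B X_{cB}$, which is smooth because $X_{cB}\cong Z(\mathbf{c})$ is smooth.
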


\begin{proof}
Let $X_{wB}$ be horospherical with respect to a Levi subgroup $L_J$. Then by \cite{CS2023}, we have a decomposition
\[
w = w_{0,J} c, \qquad \ell(w) = \ell(w_{0,J}) + \ell(c),
\]
where $c$ is a Coxeter-type element with $\operatorname{supp}(c) \cap J = \emptyset$.
By Carrell's result~\cite[Corollary 4]{Carrell2011}, $X_{wB}$ is smooth if and only if $X_{w^{-1}B}$ is smooth. Since
$w^{-1} = c^{-1} w_{0,J}$, 
the projection $\pi : G/B \to G/P_J$ satisfies
\[
\pi(X_{w^{-1}B}) = X_{c^{-1}P_J}.
\]
Moreover, $\pi$ is smooth with fiber $X_{w_{0,J}}$, which is smooth. Hence $X_{w^{-1}B}$ is smooth if and only if $X_{c^{-1}P_J}$ is smooth.
Since $\operatorname{supp}(c) \cap J = \emptyset$, Proposition~\ref{prop:smoothat1P} implies that $X_{c^{-1}P_J}$ is smooth. Therefore $X_{wB}$ is smooth.
\end{proof}

We now consider the general spherical case, especially Schubert varieties that are spherical with respect to a Levi subgroup. 
For background and related results on Levi-spherical Schubert varieties, see \cite{ CS2023, GaoHodgesYong}.

\begin{theorem}\label{thm:spherical-smoothness}
Let $w\in W$ and suppose that $X_{wB}$ is spherical with respect to a Levi subgroup $L_J$. Write
$$w = w_{0,J} c, \qquad \ell(w) = \ell(w_{0,J}) + \ell(c),$$
where $c$ is a Coxeter-type element.

Then the following are equivalent:
\begin{enumerate}
\item $X_{wB}$ is smooth;
\item $X_{w^{-1}B}$ is smooth;
\item the toric Schubert variety $X_{c^{-1}P_J}$ is smooth (cf. Theorem~\ref{thm:direct-smoothness}).
\end{enumerate}
\end{theorem}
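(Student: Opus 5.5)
The plan is to mimic the proof of the horospherical corollary just above, replacing the appeal to Proposition~\ref{prop:smoothat1P} by the full criterion of Theorem~\ref{thm:direct-smoothness}.

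\textbf{Step (1)$\Leftrightarrow$(2).} This is Carrell's result \cite[Corollary 4]{Carrell2011}: $X_{wB}$ is smooth if and only if $X_{w^{-1}B}$ is smooth. It is independent of sphericity.

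\textbf{Step (2)$\Leftrightarrow$(3).} Write $w^{-1}=c^{-1}w_{0,J}$, with $\ell(w^{-1})=\ell(c^{-1})+\ell(w_{0,J})$.

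First I check that $c^{-1}\in W^{P_J}$. If $c^{-1}s_\alpha<c^{-1}$ for some $\alpha\in J$, then $w^{-1}=c^{-1}w_{0,J}=(c^{-1}s_\alpha)(s_\alpha w_{0,J})$. This gives
\[
\ell(w^{-1})\le \ell(c^{-1})-1+\ell(w_{0,J})-1,
\]
contradicting length additivity. So $c^{-1}w_{0,J}$ is the parabolic factorization of $w^{-1}$, with minimal coset representative $c^{-1}$ and $W_J$-part $w_{0,J}$. The $W_J$-part is the maximal element of $W_J$.

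Consequently $X_{w^{-1}B}=\pi^{-1}(X_{c^{-1}P_J})$, where $\pi:G/B\to G/P_J$ is the projection. The inclusion $\subseteq$ is clear. For $\supseteq$, note that $\pi^{-1}(X_{c^{-1}P_J})$ is irreducible, $B$-stable and closed. Its dimension is $\ell(c^{-1})+\ell(w_{0,J})=\ell(w^{-1})$, and it contains $Bw^{-1}B/B$.

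Since $\pi$ is a smooth morphism, being a locally trivial fibration with fiber $P_J/B$, the restriction $X_{w^{-1}B}\to X_{c^{-1}P_J}$ is also smooth, being a base change of $\pi$. Hence $X_{w^{-1}B}$ is smooth if and only if $X_{c^{-1}P_J}$ is smooth. The "only if" direction uses that smooth surjective morphisms descend smoothness (regularity descends under faithfully flat maps).

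Finally, $c^{-1}$ is a product of distinct simple reflections, so it is of Coxeter type. By Proposition~\ref{prop:toric}, $X_{c^{-1}P_J}$ is toric, and its smoothness is decided by Theorem~\ref{thm:direct-smoothness}.

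\textbf{Main obstacle.} The delicate point is that, unlike the horospherical case, $\operatorname{supp}(c)$ may meet $J$, so one cannot assume a priori that $c^{-1}\in W^{P_J}$. The length-additivity argument above handles this. I would also double-check that the hypothesis of the theorem (the factorization with $\ell(w)=\ell(w_{0,J})+\ell(c)$) is exactly what is used, and that nothing further about sphericity is needed beyond guaranteeing that this factorization exists with $c$ of Coxeter type (by \cite{CS2023}).
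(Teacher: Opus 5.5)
Your proposal is correct, and step (1)$\Leftrightarrow$(2) is the paper's appeal to \cite{Carrell2011}. For (2)$\Leftrightarrow$(3), however, you take a different and more elementary route.

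The paper notes that $w^{-1}=c^{-1}w_{0,J}$ is a parabolic decomposition whose $W_J$-part is the maximal element of $W_J$. This makes it trivially a Billey--Postnikov decomposition. The paper then cites \cite[Theorem 3.3]{RichmondSlofstra}: $X_{w^{-1}B}\to X_{c^{-1}P_J}$ is a Zariski-locally trivial bundle with fiber $X_{w_{0,J}B}\cong P_J/B$, so the total space is smooth if and only if the base is.

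You instead show directly, by irreducibility and a dimension count, that $X_{w^{-1}B}=\pi^{-1}(X_{c^{-1}P_J})$. The restricted map is then a base change of the smooth morphism $\pi:G/B\to G/P_J$. Smoothness ascends by composition and descends because regularity descends along faithfully flat maps.

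Both arguments rest on the same fact: the fiber is all of $P_J/B$ because the $W_J$-component is $w_{0,J}$. The BP machinery is built for the harder case where the fiber is a proper Schubert subvariety and local triviality is a genuine theorem. Here its only payoff is Zariski-local triviality, which your base-change argument does not need. Your version is self-contained.

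You also prove $c^{-1}\in W^{P_J}$ via $\ell(w^{-1})\le \ell(c^{-1})+\ell(w_{0,J})-2$, which the paper only asserts. This is worth making explicit because $\operatorname{supp}(c)$ can meet $J$; for example, $w=w_0=s_1\cdot s_2s_1$ in type $A_2$ with $J=\{\alpha_1\}$. That is also why the final step needs Theorem~\ref{thm:direct-smoothness} rather than Proposition~\ref{prop:smoothat1P}.
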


\begin{proof}
The equivalence $(1) \iff (2)$ follows from~\cite[Corollary 4]{Carrell2011}.

For $(2) \iff (3)$, we consider the inverse element $w^{-1} = c^{-1} w_{0,J}$. Since lengths are additive, $\ell(w^{-1}) = \ell(c^{-1}) + \ell(w_{0,J})$. Because $c^{-1} \in W^J$ and $w_{0,J} \in W_J$, this gives the standard parabolic decomposition of $w^{-1}$ with respect to $J$.

We now apply the work of Richmond and Slofstra~\cite{RichmondSlofstra} on the Billey-Postnikov (BP) decompositions. 
In their notation, we set their $J = \emptyset$ and their $K = J$. By~\cite[Proposition 4.2(c)]{RichmondSlofstra}, a parabolic decomposition $x = vu$ with respect to $K$ is a BP decomposition if $u$ is the maximal element of $[e, x] \cap W_K^\emptyset$. Substituting our elements ($x = w^{-1}$, $v = c^{-1}$, and $u = w_{0,J}$), we see this condition is trivially satisfied because $w_{0,J}$ is the maximal element of the entire parabolic subgroup $W_J$. Thus, $w^{-1} = c^{-1} w_{0,J}$ is a valid BP decomposition.

Consequently, by~\cite[Theorem 3.3]{RichmondSlofstra}, the projection $\pi: X_{w^{-1}B} \to X_{c^{-1}P_J}$ is a Zariski-locally trivial fiber bundle with fiber $X_{w_{0,J}B}$. Furthermore, part (1) of~\cite[Theorem 3.3]{RichmondSlofstra} dictates that the total space $X_{w^{-1}B}$ is smooth if and only if both the base $X_{c^{-1}P_J}$ and the fiber $X_{w_{0,J}B}$ are smooth. Since the fiber $X_{w_{0,J}B}$ is isomorphic to the full flag variety of the Levi subgroup, it is intrinsically smooth. Therefore, $X_{w^{-1}B}$ is smooth if and only if $X_{c^{-1}P_J}$ is smooth.
This finishes the proof of our theorem. 
\end{proof}

In light of Theorem~\ref{thm:direct-smoothness}, our previous result provides an effective combinatorial way of determining which spherical Schubert varieties are smooth in all types.

\bibliographystyle{plain}
\bibliography{references_final}
\end{document}